\newdimen\minCDarrowwidth
\def\cN{\mathcal{N}}
\def\cQ{\mathcal{Q}}
\def\cP{\mathcal{P}}
\def\cS{\mathcal{S}}
\def\Im{\mbox{ Im }}
 \DeclareMathOperator{\Hom}{Hom}
 \DeclareMathOperator{\Spec}{Spec}
\DeclareMathOperator{\Sch}{\mathbf{Sch}}\DeclareMathOperator{\Sets}{\mathbf{Sets}}
\newtheorem{lemma}{Lemma}[section]
\newtheorem{theorem}[lemma]{Theorem}
\newtheorem{proposition}[lemma]{Proposition}
\theoremstyle{definition}
\newtheorem{definition}[lemma]{Definition}
\newtheorem{example}[lemma]{Example}
\newtheorem{remark}[lemma]{Remark}
\newtheorem*{notation}{Notation}
\numberwithin{equation}{section}
\newcommand{\bean}{\begin{eqnarray}}
\newcommand{\eean}{\end{eqnarray}}
\newcommand{\bea}{\begin{eqnarray*}}
\newcommand{\eea}{\end{eqnarray*}}
\newcommand{\be}{\begin{displaymath}}
\newcommand{\ee}{\end{displaymath}}
\newcommand{\ol}{\overline}
\begin{document}

\title{A universal \'etale lift of a proper local embedding}


\author{Anca~M.~Musta\c{t}\v{a}}
\author{Andrei~Musta\c{t}\v{a}}

\address{School of Mathematical Sciences, University College Cork, Cork, Ireland}
\email{{\tt a.mustata@ucc.ie, andrei.mustata@ucc.ie}}


\date{\today}


\begin{abstract}

To any finite local embedding of Deligne--Mumford stacks $g: Y\to X$
we associate  an \'etale, universally closed morphism $F_{Y/X}\to X$
such that for the complement $Y^2_X$ of the image of the diagonal $Y
\to Y\times_XY$, the stack $F_{Y^2_X/Y}$ admits a canonical closed
embedding in $F_{Y/X}$, and  $F_{Y/X}\times_XY$ is a disjoint union
of copies of $F_{Y^2_X/Y}$.  The stack $F_{Y/X}$ has a natural
functorial presentation, and the morphism $F_{Y/X}\to X$ commutes
with base-change. The image of $Y^2_X$ in $Y$ is the locus of
 points where the morphism $Y \to g(Y)$ is not smooth. Thus for many practical purposes, the
morphism $g$ can be replaced in a canonical way by copies of the
closed embedding $F_{Y^2_X/Y}\to F_{Y/X}$.

\end{abstract}

\maketitle
\bigskip

\section*{Introduction}

    Local embeddings of Deligne-Mumford stacks constitute a natural extension of the notion of closed embeddings of
    schemes. For example, the diagonal of a Deligne-Mumford a stack,
and the natural morphism from its inertia stack to the stack itself, both belong to this class.

  Many difficulties in extending classical algebraic geometry constructions from  the category of
  schemes to stacks stem from the existence of such local embeddings. To solve this problem, one can rely on the local nature for the \'etale topology of these morphisms. Indeed, given a local embedding of algebraic stacks $g:Y \to X$, there exist \'etale atlases $V_0$ and $U$ of $Y$ and $X$ respectively, and a closed embedding $V_0 \hookrightarrow U$ compatible with the morphism $g$. This local construction yields the notions of normal bundle of a local embedding as introduced by A. Vistoli (\cite{vistoli}), and deformation to the normal cone as introduced by A. Kresch (\cite{kresch}), and consequently an intersection theory on smooth Deligne-Mumford stacks.

    In \cite{noi1} we argued that a more refined \'etale presentation of the morphism $g:Y \to X$ is needed if such
    ubiquitous constructions like blow-ups are to be defined for local embeddings. For this purpose, given a proper local
    embedding $g:Y \to X$, we constructed an \'etale atlas $U$ of $X$ such that the fibre product $Y\times_XU$ is a union
    of \'etale atlases $V_i$ of $Y$, each of which is embedded as a closed subscheme in $U$. The locus where the images
    $W_i$ of $V_i$-s intersect pairwise is an \'etale atlas for the stack of non-smooth values of $g$. Moreover, the
    stratification determined by the number of intersecting components $W_i$ indicates how far the morphism $g$ is from
    being \'etale on the image over each point in $g(Y)$. The \'etale atlas $U$ thus encodes essential information about
    the structure of $g$. In \cite{noi1} we set out to translate this information from \'etale atlases to stacks amenable
    to global constructions like e.g. blow-ups, or intersection rings. For a proper $g:Y \to X$, we found  a pair of
    stacks $Y'$ and $X'$ with \'etale, universally closed morphisms $Y'\to Y$ and $X'\to X$, and a morphism $g':Y'\to X'$
    such that $Y'=Y\times_XX'$ is a disjoint union of stacks each embedded as a closed substack of $X'$ via $g'$.
    However, our construction was not unique. Indeed, it depends on the choice of a suitable \'etale atlas $U$ of $X$.

 In this paper we introduce an \'etale, universally closed morphism  $F_{Y/X}\to X$ which is intrinsically
 associated to the proper local embedding $g:Y\to X$, which has
 the desired properties listed above, and which commutes with base
 change. We give a functorial presentation of this stack and study
 its properties in more detail. As applications, this canonical definition provides
 grounds for extending other constructions from schemes to stacks.
 For example, we can now define compactifications of configuration
 spaces for stacks by extending W. Fulton and R. MacPherson's  \cite{fmp} constructions in a coherent, natural way. Also, in our opinion the stack $F_{X/X\times X}$ provides a
 natural context for orbifold products like the ones defined by
 Edidin, Jarvis and Kimura in \cite{ejk} for quotient
 Deligne-Mumford stacks. We will explore such applications in more
 detail in a sequel to this paper.

We start this article by discussing the case when $g: Y\to X$ is a
morphism of Deligne--Mumford stacks which is finite and \'etale on
its image. In \cite{noi1} we showed that such a morphism can be
factored into an \'etale, universally closed morphism $F_{Y/X} \to
X$ and an embedding $Y \hookrightarrow F_{Y/X}$, which identifies
$Y$ with the preimage of $g(Y)$ in $F_{Y/X}$, and such that
$F_{Y/X}\setminus Y \cong X\setminus g(Y)$. In Proposition \ref{lift
for morph etale on its image} we provide a detailed list of
properties for $F_{Y/X}$, some of which will prove very useful in
more general set-up. For example, property (10) will lead to a
natural definition of a lift $F_{Y/X}$ in the case when $g$ is a
general proper local embedding, and $Y$ is reducible.

For any  morphism of Deligne--Mumford stacks $g: Y\to X$, the
fibered product $Y\times_XY$ represents the functor of isomorphisms
in $X$ of objects coming from $Y$: its objects over a scheme $S$ are
tuples $(\xi_1, \xi_2, f)$, where $\xi_1, \xi_2$ are objects in
$Y(S)$, and $f$ is an isomorphism between $g(\xi_1)$ and $g(\xi_2)$.
Let $\Delta: Y \to Y\times_XY$ denote the diagonal morphism and let
$Y^2_X$ denote the complement of its image in $Y\times_XY$. If $g$
is finite and unramified, then so are the projections $Y^2_X \to Y$,
and their image is the locus of points where $g$ is not \'etale on
its image. We can reiterate this construction with $(Y^2_X)^2_{Y}
\to Y^2_X$. Here $(Y^2_X)^2_{Y}$ is isomorphic to the complement
$Y^3_X$ of all diagonals in $Y\times_XY\times_XY$, and as such it
admits three different projections to $Y^2_X$. By successively
reiterating this construction until we reach $Y^{n+1}_X= \emptyset,$
we obtain a canonical network $\cN^n(Y/X)$ of local embeddings, the
last one of which is \'etale on its image. This network commutes
with base change, and it encapsulates the local \'etale structure of
the morphism $g:Y\to X$ in a way which is simultaneously
comprehensive and non-redundant.

In a sequence of steps, the network  $\cN^n(Y/X)$ can be replaced by
another network $\cN^0(Y/X)$ where all morphisms are closed
embeddings, and the objects admit \'etale, universally closed
surjections to the objects of $\cN^n(Y/X)$. The target of the new
network is $F_{Y/X}$. Moreover, the other objects of $\cN^0(Y/X)$
are also canonical lifts for the local embeddings contained in
$\cN^n(Y/X)$. Thus, for practical purposes the morphism $g$ can be
replaced by a set of copies of the closed embedding $F_{Y^2_X/Y}\to
F_{Y/X}$. The functorial presentation and properties of $F_{Y/X}$
are listed in Theorem \ref{main} and the Definition preceding it.

In \cite{rydh}, David Rydh constructed a different canonical lift
$E_{Y/X}$ for any unramified morphism $g:Y\to X$: he showed that $g$
has a universal factorization $Y \to E_{Y/X}\to X$, where the first
 morphism is a closed embedding $i$ and the second is \'etale;
 moreover, $E_{Y/X}$ comes with an open immersion $j:X \to E_{Y/X}$ such that
$i(Y)$ is the complement of $j(X)$ in $E_{Y/X}$. His construction
works in a more general context than ours, and indeed it was meant to
address the lack of an intrinsic presentation for our \'etale lift
in \cite{noi1}. However $E_{Y/X}$ differs from $F_{Y/X}$ in its range of applicability.  We would
like to thank David Rydh for his useful observations.

 The authors were supported by a Science
Foundation Ireland grant.

\section{The universal lift of a local embedding}

The stacks in this article are assumed to be algebraic in the sense
of Deligne--Mumford,  Noetherian, and all morphisms considered
between them are of finite type.


\subsection{The lift of a local embedding \'etale on its image.}

\begin{definition}
Following \cite{vistoli}, we will call local embedding any
representable unramified morphism of finite type of stacks. A
regular local embedding is a local embedding which is also locally a
complete intersection.
\end{definition}



\begin{proposition}\label{lift for morph etale on its image}
Let $g: Y \to X$ be a proper morphism of stacks  \'etale on its image.
There exists an \'etale morphism $e_g: F_{Y/X} \to X$ together with
an isomorphism $\phi:
g(Y)\times_XF_{Y/X} \to Y$,  such that
\begin{itemize}
\item[(0) i)]  the triangles in the following diagram are
commutative
\bea \diagram  {Y\times_X F_{Y/X}} \dto_{g\times \mbox{id}_{F_{Y/X}}} \rto^{} & {Y} \dto^{g} \\
  {g(Y)\times_XF_{Y/X}} \rto \urto^{\phi} & {X},  \enddiagram \eea
  where the upper horizontal arrow is the projection on $Y$ and the lower horizontal arrow
  is the restriction of $e_g$ to
  $g(Y)\times_XF_{Y/X}$;
 \item[(0) ii)]  Let  $p_2: g(Y)\times_XF_{Y/X}\to F_{Y/X}$ be the projection on the second factor and consider the closed embedding $i:= p_2\circ \phi^{-1}$. The
 restriction of $e_g$ induces an isomorphism $F_{Y/X}\setminus i(Y) \to X\setminus g(Y)$.
  \end{itemize}
 The following properties also hold:
\begin{itemize}
\item[(1)] For any stack $Z$, there is an equivalence of categories between $\Hom (Z, F_{Y/X})$ and  the category of
morphisms $Z \to X$ endowed with a section
$$s: g(Y) \times_X Z \to Y \times_X Z $$
for the \'etale map $Y \times_X Z  \to g(Y) \times_X Z$.
\item[(2)] The triple $(F_{Y/X}, e_g, \phi)$, with $e_g$ \'etale and satisfying (0)i) and (0)ii) is uniquely defined up to unique 2-isomorphism.
\item[(3)] The morphism $e_g: F_{Y/X} \to X$ is universally closed.
\item[(4)] If $g:Y\to X$ is a closed embedding, then $F_{Y/X}\cong X$.
\item[(5)] If $g:Y\to X$ is \'etale and proper, and $X$ is connected then $F_{Y/X}\cong Y$.
\item[(6)] For any morphism of stacks $u: X' \to X$ and $Y':= Y\times_{X}X'$,
there exists a morphism $F_u:F_{Y'/X'} \to F_{Y/X}$ making the squares in the following diagram Cartesian:
\bea     \diagram    Y' \dto \rto & {F_{Y'/X'}}  \rto \dto^{F_u} & {X'} \dto^{u}  \\
Y \rto & {F_{Y/X}} \rto & {X}. \enddiagram  \eea
\item[(7)] If $h: Z \to Y$ is proper and \'etale on its image, and $g:Y\to X$ is a closed embedding, then $F_{Z/Y}\cong Y\times_XF_{Z/X}$. In particular, there exists a natural \'etale morphism $g_*:F_{Z/Y} \to F_{Z/X}$.
\item[(8)] For any morphism  $h: Z \to Y$ proper and \'etale on its image, the composition  morphism $g\circ i_h: F_{Z/Y} \to X$, universally closed and  \'etale on its image, comes with an \'etale map $F_{F_{Z/Y}/X} \to X$ satisfying properties (0) and (1). Moreover,
    \bea F_{F_{Z/Y}/X} \cong F_{Z/F_{Y/X}}.\eea
    In particular, if the morphism $h: Z \to Y$ is \'etale, then there exists a natural morphism $h_*: F_{Z/X} \to F_{Y/X}$.
\item[(9)] If $h: Z \to Y$ and  $g:Y\to X$  are proper and \'etale on their images, and if  $g(h(Z))\times_XY\cong h(Z)$ over $Y$,
then there exists a morphism $g_*:F_{Z/Y} \to F_{Z/X}$ such that $e_{g\circ h}\circ \bar{g}=g\circ e_h$. In this case
 $  F_{F_{Z/Y}/F_{Z/X}} \cong F_{F_{Z/Y}/X} \cong F_{Z/F_{Y/X}}.$
 \item[(10)] Given any proper local embeddings $g: Y\to X$ and $f: T\to X$, and $Z:= Y\times_X T$, there are natural isomorphisms
 \bea F_{Y/X}\times_XF_{T/X}  \cong F_{F_{Z/T}/F_{Y/X}} \cong  F_{F_{Z/Y}/F_{T/X}} \cong F_{F_{Z/Y}\bigcup_{Z}F_{Z/T}/F_{Z/X}},\eea
 where $F_{Z/Y}\bigcup_{Z}F_{Z/T}$ denotes the stack obtained by gluing the stacks $F_{Z/Y}$ and $F_{Z/T}$ along $Z$.
\end{itemize}
\end{proposition}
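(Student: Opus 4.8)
The plan is to verify each isomorphism by comparing functors of points through the universal presentation of property (1), using the base-change property (6) to identify the auxiliary lifts, the composition property (9) to produce the morphisms into $F_{Z/X}$, and the uniqueness of property (2) to upgrade each resulting equivalence of functors to a canonical isomorphism of stacks. For a test stack $S$, property (1) applied to both factors shows that $\Hom(S, F_{Y/X}\times_X F_{T/X})$ is the groupoid of triples $(\phi, s_Y, s_T)$, where $\phi: S\to X$, where $s_Y$ is a section of the \'etale map $Y\times_X S\to g(Y)\times_X S$, and where $s_T$ is a section of $T\times_X S\to f(T)\times_X S$; this is the object I will match to each of the other three stacks.

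First I would treat $F_{Y/X}\times_X F_{T/X}\cong F_{F_{Z/T}/F_{Y/X}}$. Since $Z=Y\times_X T$ is the base change of $g$ along $f$, property (6) gives $F_{Z/T}\cong F_{Y/X}\times_X T$, under which the structure morphism $F_{Z/T}\to F_{Y/X}$ becomes the first projection, i.e. the base change of the proper morphism $f$ \'etale on its image along $e_g: F_{Y/X}\to X$; in particular it is again proper and \'etale on its image, so $F_{F_{Z/T}/F_{Y/X}}$ is defined. Applying property (1) to this projection, a map $S\to F_{F_{Z/T}/F_{Y/X}}$ is a map $\psi: S\to F_{Y/X}$ together with a section of the induced \'etale map $F_{Z/T}\times_{F_{Y/X}} S\to D\times_{F_{Y/X}} S$, where $D$ is the image of $F_{Z/T}$ in $F_{Y/X}$. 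By property (1) for $F_{Y/X}$ the datum $\psi$ is exactly a pair $(\phi, s_Y)$, while the identification $F_{Z/T}\cong F_{Y/X}\times_X T$ turns the last two fibre products into $T\times_X S$ and $f(T)\times_X S$, so the remaining section is precisely an $s_T$. This reproduces the groupoid of triples $(\phi, s_Y, s_T)$, and (2) yields the canonical isomorphism. Interchanging the roles of $Y$ and $T$ gives the second isomorphism $F_{Y/X}\times_X F_{T/X}\cong F_{F_{Z/Y}/F_{T/X}}$.

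For the third isomorphism I would first build the morphism $W\to F_{Z/X}$, where $W:=F_{Z/Y}\bigcup_Z F_{Z/T}$. The hypotheses of property (9) hold for the tower $Z\to Y\to X$: the projection $Z\to Y$ is proper and \'etale on its image, and since the image of $Z$ in $Y$ is $g^{-1}(f(T))$ one checks that $g(h(Z))\times_X Y\cong h(Z)$. Hence (9) produces $g_*: F_{Z/Y}\to F_{Z/X}$, proper and \'etale on its image and compatible with the structure maps to $X$; symmetrically one obtains $f_*: F_{Z/T}\to F_{Z/X}$. One checks that both restrict to the canonical closed embedding $Z\hookrightarrow F_{Z/X}$ on the common substack $Z$, so they descend to a morphism from the pushout $W\to F_{Z/X}$. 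Stratifying $X$ by $g(Y)\cap f(T)$, $g(Y)\setminus f(T)$, $f(T)\setminus g(Y)$ and the open complement, one finds that $W\to F_{Z/X}$ is an isomorphism over the image of $Z$ in $F_{Z/X}$ and a disjoint union of \'etale sheets away from it; from this one reads off that $W\to F_{Z/X}$ is proper and \'etale on its image, so $F_{W/F_{Z/X}}$ is defined.

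Finally I would match $F_{W/F_{Z/X}}$ with the triples $(\phi, s_Y, s_T)$. By property (1), a map $S\to F_{W/F_{Z/X}}$ is a map $S\to F_{Z/X}$, that is a pair $(\phi, s_Z)$ with $s_Z$ a section over $g(Y)\cap f(T)$, together with a section of the \'etale map $W\times_{F_{Z/X}} S\to E\times_{F_{Z/X}} S$, where $E$ is the image of $W$. Using the pushout description of $W$ and the identifications $F_{Z/Y}\cong F_{T/X}\times_X Y$, $F_{Z/T}\cong F_{Y/X}\times_X T$, such a section amounts to a section $s_Y$ over all of $g(Y)$ and a section $s_T$ over all of $f(T)$ which restrict to $s_Z$ on the overlap; conversely a pair $(s_Y,s_T)$ restricts to such an $s_Z$ and recovers the section of $W$. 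This gives the equivalence of functors, and (2) upgrades it to the third canonical isomorphism. The main obstacle is exactly this last step: because the source $W$ is a pushout rather than a fibre product, one must argue carefully that a section of $W\times_{F_{Z/X}} S\to E\times_{F_{Z/X}} S$ is the same as a compatible pair extending $s_Z$, which is where the gluing along $Z$ and the \'etale-on-image property of $W\to F_{Z/X}$ at the points of $Z$ are genuinely used.
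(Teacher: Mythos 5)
You have proved only item (10). The proposition also asserts the \emph{existence} of $F_{Y/X}$ with properties (0)i)--ii), together with items (1)--(9), and your argument takes all of these as given --- most critically the universal property (1), the base-change property (6) and the composition property (9), which you invoke at every step. In the paper the substantive work lies precisely there: $F_{Y/X}$ is produced by an explicit \'etale groupoid presentation $[R'\rightrightarrows U]$ over a suitable cover $U$ of $X$ (recalled from \cite{noi1}); uniqueness (2) is deduced from (1) together with (0); universal closedness (3) is a valuative-criterion argument that genuinely uses properness of $g$; and (6)--(9) are each verified by exhibiting the section data required by (1). None of this appears in your proposal, so as a proof of the stated proposition it has a large gap: the functor you are computing points of is never shown to exist or to be representable by an \'etale $X$-stack.

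For the part you do treat, your route essentially coincides with the paper's. The first two isomorphisms of (10) follow from (6) --- the paper applies (6) twice, you apply it once and then unwind (1) by hand, which is the same content --- and the third is obtained, as in the paper, by building $W=F_{Z/Y}\bigcup_Z F_{Z/T}\to F_{Z/X}$ via (9), embedding $W$ into $F_{Y/X}\times_X F_{T/X}$, producing the \'etale comparison map $F_{Y/X}\times_X F_{T/X}\to F_{Z/X}$ from the universal property, and concluding by (0) and (2). Your verification that $g(h(Z))\times_X Y\cong h(Z)$, so that (9) applies, is in fact more careful than the paper's, which justifies it with the misprint ``because $Z=Y\times_X Y$''. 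The final identification of a section of $W\times_{F_{Z/X}}S\to E\times_{F_{Z/X}}S$ with a compatible pair $(s_Y,s_T)$ over the pushout is the delicate point you correctly flag; the paper likewise compresses it to ``these properties follow canonically from the definitions,'' so spelling it out, as you propose, would be a genuine improvement --- but it does not compensate for the missing ten elevenths of the statement.
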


\begin{proof}
An explicit \'etale groupoid presentation for a functor $F_{Y/X}$
which satisfies properties (0) and (1) was found in \cite{noi1},
section 1.1. We briefly recall it here. One chooses an \'etale cover
by a scheme $p:U \to X$ such that  $Y\times_XU\cong V=V_1\bigsqcup
V_2$ where $V_1=g(Y)\times_X U$.
 Let $$S_{ij}:=\Im (\phi_{ij}: V_i\times_Y V_j \to U\times_X U ),$$
for the map $\phi_{ij}$ given as a composition
 $$V_i\times_Y V_j \hookrightarrow V\times_Y V = V\times_Y(
 Y\times_X U) \cong V \times_X U \to U\times_X U.$$
  A groupoid presentation of  $F_{Y/X}$ is given by $\left[ R' \rightrightarrows U\right]$
$$R':= (U\times_XU) \setminus (S_{12}\cup S_{21}\cup (S_{22}\setminus S_{11})) \cup \Im e.$$

To prove property (2), we note that for any triple $(F', e', \phi')$ satisfying the properties (0), there is a canonical section $g(Y)\times_XF'\to Y\times_XF'$ which, together with the map $e'$, determine a unique morphism $u: F'\to F_{Y/X}$ such that  $e\circ u=e'$, due to condition (1). Both $e$ and $e'$ are \'etale, and so $u$ must be \'etale as well. On the other hand, $e$ and $e'$ induce isomorphisms $g(Y)\times_XF_{Y/X}\cong Y\cong g(Y)\times_XF'$ and $F_{Y/X}\setminus i(Y)\cong X\setminus g(Y) \cong F'\setminus i'(Y)$. Thus $u$ is both \'etale and bijective, and so an isomorphism.

Property (3) follows from the valuation criterium in conjunction
with property (1). Consider a complete discrete valuation ring $R$
with field of fractions $K$, a commutative diagram
\bea \begin{CD} \Spec (K) @>{u} >> F_{Y/X}\\
                 @VV{\rho} V  @VV{p} V \\
                 \Spec(R) @>{v}>> X.   \end{CD} \eea
The closed embedding $g(Y)\times_X\Spec (K)\to \Spec (K)$ is either
the empty embedding or an isomorphism. If empty, then $u$ factors
through $\Spec (K)\to  F_{Y/X}\setminus Y \cong X\setminus g(Y)$ and
so $v$ also naturally yields $\Spec (R)\to X\setminus g(Y) \cong
F_{Y/X}\setminus Y$. If an isomorphism,  then the map $v$ induces a
natural morphism $ \Spec (K)\cong g(Y)\times_X\Spec (K) \to Y$ whose
composition with $g$ is $u\rho$. As $g$ is proper, there is a lift
$\Spec (R) \to Y$, which yields a section $g(Y)\times_X \Spec (R)
\to Y\times_X \Spec (R)$.  This, together with the map $u\to \Spec
(R) \to X$ give the data for a unique morphism $\Spec (R) \to
F_{Y/X}$ as required.

 Properties (4) and (5) are direct consequences of (2).

 Property (6) was proven in \cite{noi1}, Corollary 1.8. Alternatively, it follow  immediately from (2). Indeed, consider a morphism of stacks $f: X' \to X$ and let $Y':=
Y\times_{X}X'$, with the morphism $g':Y'\to X'$ induced by $g$. Then
the \'etale morphism  $X'\times_XF_{Y/X} \to X'$ induced by $e_g$,
together with the composition
 \bea & g'(Y')\times_{X'}(X'\times_XF_{Y/X}) \cong (g(Y)\times_XX')\times_{X'}(X'\times_XF_{Y/X}) \cong & \\ & \cong  g(Y)\times_{X}X'\times_XF_{Y/X}\cong X'\times_XY\cong Y',& \eea
satisfy properties (0) for the morphism $g':Y'\to X'$ and so $X'\times_XF_{Y/X} \cong F_{Y'/X'}$. Thus
\bea & Y'\cong g'(Y')\times_{X'}F_{Y'/X'}\cong (g(Y)\times_XX')\times_{X'}F_{Y'/X'}\cong & \\ &\cong g(Y)\times_XF_{Y'/X'}\cong (g(Y)\times_XF_{Y/X})\times_{F_{Y/X}}F_{Y'/X'}\cong Y\times_{F_{Y/X}}F_{Y'/X'}.&\eea

 To prove (7), we will construct a canonical morphism $F_{Z/Y} \to Y\times_XF_{Z/X}$, together with its inverse. To construct $F_{Z/Y} \to F_{Z/X}$, we first consider the composition
 $g\circ e_h: F_{Z/Y}\to Y\to X$. The canonical isomorphisms
 \bea & g(h(Z))\times_XF_{Z/Y}\cong (h(Z)\times_XY) \times_YF_{Z/Y} \cong h(Z)\times_YF_{Z/Y} \cong Z,& \mbox{ and } \\
 & Z\times_XF_{Z/Y}\cong (Z\times_XY) \times_YF_{Z/Y} \cong Z\times_YF_{Z/Y},&\eea
together with the embedding $Z \to Z\times_YF_{Z/Y}$ give a section $g(h(Z))\times_XF_{Z/Y} \to Z\times_YF_{Z/Y}$, and thus, according to (1), a map  $F_{Z/Y} \to F_{Z/X}$. This, together with the \'etale map $F_{Z/Y} \to Y$ generate the desired morphism $_{Z/Y} \to Y\times_XF_{Z/X}$. Its inverse is also constructed via property (1) as follows: We consider the projection $ Y\times_XF_{Z/X} \to Y$ together with the canonical section \bea h(Z)\times_Y( Y\times_XF_{Z/X})\cong h(Z)\times_XF_{Z/X} \to Z\times_XF_{Z/X} \cong  Z\times_Y( Y\times_XF_{Z/X}).\eea

Proof of (8): Consider now a morphism  $h: Z \to Y$ proper and \'etale on its
image. We will show that $e_g\circ e_{i\circ h}: F_{Z/F_{Y/X}} \to X$ is an 'etale lift for the composition $g\circ i_h: F_{Z/Y} \to X$. Note that  $g\circ i_h$  is universally closed and \'etale on its image  $\Im g\circ i_h = g(Y)$, though not necessarily separated.
Let $i:Y \to F_{Y/X}$ be the natural embedding induced by $g$. Then $i\circ h: Z \to  F_{Y/X}$ is the composition of a
proper morphism \'etale on its image and a closed embedding. Due to (7) applied to this composition,  there are Cartesian diagrams
\bea    \diagram    {F_{Z/Y}}  \dto \rto^{e_h} & {Y}  \rto \dto & {g(Y)} \dto  \\
F_{Z/F_{Y/X}} \rto & {F_{Y/X}} \rto^{e_g} & {X}, \enddiagram  \eea
whose composition implies that $F_{Z/Y} \cong F_{Z/F_{Y/X}}\times_X g(Y)$ canonically and that $e_g\circ e_{i\circ h}$ induces  $ F_{Z/F_{Y/X}} \setminus F_{Z/Y} \cong X\setminus g(Y)$.
  We also check that $e_g\circ e_{i\circ h}$ satisfies property (1), namely that any map $T \to F_{Z/F_{Y/X}}$ is uniquely determined by a pair of maps $f:T\to X$ and a section $s: g(Y)\times_XT\to F_{Z/Y}\times_XT$. Indeed, such a pair, together with the composition $g(Y)\times_XT\to F_{Z/Y}\times_XT \to Y\times_XT$, determine in a first instance a map $T \to F_{Y/X}$, whose composition with $e_g$ yields $f$. The restriction of $s$ also yields a section $g(h(Z))\times_XT\to Z\times_XT$ and so a sequence of morphisms over $F_{Y/X}$:
\bea i(h(Z))\times_{F_{Y/X}}T \to g(h(Z))\times_XT \to Z,  \eea
and so a section $i(h(Z))\times_{F_{Y/X}}T \to Z\times_{F_{Y/X}}T$. This determines a map $T\to F_{Z/F_{Y/X}}$ whose composition with $e_g\circ e_{i\circ h}$ yields $f$.

Proof of (9): The isomorphisms
\bea  g(h(Z))\times_XF_{Z/Y} \cong (g(h(Z))\times_X Y)\times_YF_{Z/Y}\cong h(Z)\times_YF_{Z/Y}\cong Z,\eea
together with the composition $g\circ e_h$, give a morphism $F_{Z/Y}\to F_{Z/X}$. Clearly $F_{F_{Z/Y}/F_{Z/X}}$ satisfies properties (0) as an \'etale lift of $g\circ e_h$, hence the isomorphism
$F_{F_{Z/Y}/F_{Z/X}}\cong F_{F_{Z/Y}/X}$.

Proof of (10): The first two isomorphisms are direct consequences of property (6). Indeed,
\bea F_{F_{Y\times_XT/T}/F_{Y/X}} \cong F_{F_{Y/X}\times_XT/F_{Y/X}} \cong F_{Y/X}\times_X F_{T/X}, \eea
and similarly for $F_{F_{Y\times_XT/Y}/F_{T/X}}$. To prove the last isomorphism, we will first need to pinpoint the existence of a natural local embedding
$F_{Z/Y}\bigcup_{Z}F_{Z/T} \to F_{Z/X}$. Indeed, via property (9), there exist compositions
\bea F_{Z/T} \hookrightarrow F_{F_{Z/T}/X} \cong F_{Z/F_{T/X}} \to F_{Z/X} \mbox{ and }  F_{Z/Y} \hookrightarrow F_{F_{Z/Y}/X} \cong F_{Z/F_{Y/X}} \to F_{Z/X}.\eea
Indeed, the hypotheses necessary for property (9) hold because $Z=Y\times_XY$.
Moreover, the compositions above, together with the embeddings of $Z$ into $F_{Z/T}$ and $F_{Z/Y}$, respectively, form a commutative diagram, which insure the existence of the morphism
$F_{Z/Y}\bigcup_{Z}F_{Z/T} \to F_{Z/X}$ (conform \cite{abramovich}, Appendix 1). Moreover, by construction this morphism is proper and a local embedding.

 In a similar way we can check the existence of a closed embedding \bea j:F_{Z/Y}\bigcup_{Z}F_{Z/T} \to F_{Y/X}\times_XF_{T/X}.\eea Indeed,
the isomorphisms
  $F_{Y/X}\times_XF_{T/X}  \cong F_{F_{Z/T}/F_{Y/X}} \cong  F_{F_{Z/Y}/F_{T/X}}$
 implicitly state the existence of closed embeddings of $F_{Z/Y}$ and $F_{Z/T}$ into $F_{Y/X}\times_XF_{T/X}$, which commute with the embeddings of $Z$
into $F_{Z/T}$ and $F_{Z/Y}$ respectively, and thus define the closed embedding $j$.

  Furthermore, property (9) implies the existence of natural morphisms from $F_{Z/Y}$ and $F_{Z/T}$ to $F_{Z/X}$, which induce a natural morphism $F_{Z/Y}\bigcup_{Z}F_{Z/T} \to F_{Z/X}$.

  The existence of a natural morphism $e: F_{Y/X}\times_XF_{T/X} \to F_{Z/X}$ follows from the universal property (1) of $F_{Z/X}$. Indeed, via the canonical \'etale morphism  $F_{Y/X}\times_XF_{T/X} \to X$, there are natural morphisms
  \bea  \Im (Z \to X)\times_XF_{Y/X}\times_XF_{T/X}  \hookrightarrow \Im f\times_XF_{T/X}\times_XF_{Y/X} \to T\times_XF_{Y/X}\times_XF_{T/X},\eea
  and similarly
   \bea  \Im (Z \to X)\times_XF_{T/X}\times_XF_{T/X}  \hookrightarrow \Im g\times_XF_{Y/X}\times_XF_{T/X} \to Y\times_XF_{Y/X}\times_XF_{T/X},\eea
  forming a commutative diagram with the projections to $F_{Y/X}\times_XF_{T/X}$, and thus inducing a section
  \bea  \Im (Z \to X)\times_XF_{Y/X}\times_XF_{T/X}  \to Z \times_XF_{Y/X}\times_XF_{T/X}.  \eea
  This proves the existence of the natural morphism $e: F_{Y/X}\times_XF_{T/X} \to F_{Z/X}$, which is \'etale because the natural maps from both its target and source to $X$ are \'etale. We have thus obtained a diagram
  \bea \diagram     && F_{Y/X}\times_XF_{T/X} \dto^{e} \\
  F_{Z/Y}\bigcup_{Z}F_{Z/T} \urrto^{j} \rrto && F_{Z/X} , \enddiagram  \eea
  which is commutative due to the natural choices of the morphisms and property (1).

  By (0) and (2), it remains to show that the complement of $ F_{Z/Y}\bigcup_{Z}F_{Z/T} $ in $F_{Y/X}\times_XF_{T/X}$ is naturally isomorphic to the complement of
  $\Im ( F_{Z/Y}\bigcup_{Z}F_{Z/T} \to F_{Z/X})$ in $F_{Z/X}$, and that there is a natural isomorphism
  \bea  \Im (F_{Z/Y}\bigcup_{Z}F_{Z/T} \to F_{Z/X})\times_{F_{Z/X}}(F_{Y/X}\times_XF_{T/X}) \cong F_{Z/Y}\bigcup_{Z}F_{Z/T}.\eea
 These properties follow canonically from the definitions of the objects and morphisms involved.

\end{proof}

\begin{lemma} \label{split into etale and generically deg. 1}
Let $g: Y \to X$ be a proper local embedding of Noetherian stacks, with $Y$ integral. Then there exists a stack $D_{Y/X}$ together with an \'etale epimorphism $e: Y\to D_{Y/X}$ and a proper local embedding $g_1: D_{Y/X}\to X$ of generic degree 1, such that $g=g_1\circ e$. Moreover, $D_{Y/X}$ is unique up to an isomorphism.
\end{lemma}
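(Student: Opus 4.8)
The plan is to construct $D_{Y/X}$ as a quotient of $Y$ by the \emph{monodromy} associated to $g$, i.e.\ by the equivalence relation on $Y$ encoding which points of $Y$ map to the same point of $X$ and are connected through a chain of components of $Y^2_X$. Concretely, the starting observation is that since $g$ is a proper local embedding and $Y$ is integral, the generic fiber of $g$ over its image $g(Y)$ is finite of some constant degree $d$, and the two projections $Y^2_X \rightrightarrows Y$ generate an \'etale equivalence relation on $Y$ (after restricting to the open locus where $g$ is \'etale on its image, this follows because $Y^2_X \to Y$ is \'etale there by the discussion preceding Proposition~\ref{lift for morph etale on its image}). The candidate for $D_{Y/X}$ is the stack quotient of $Y$ by the smallest \'etale equivalence relation $R \subseteq Y\times_X Y$ containing the diagonal and compatible with $g$; the map $e: Y \to D_{Y/X}$ is the quotient, and $g$ descends to $g_1: D_{Y/X}\to X$ because $R$ lies over the diagonal of $X$.

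First I would make the equivalence relation precise. Let $R$ be the closure in $Y\times_X Y$ of the union of the diagonal and the components of $Y^2_X$ whose images dominate $g(Y)$ (the ``generic monodromy'' components). The key point is that over the open dense locus $U \subseteq g(Y)$ where $g$ is \'etale on its image, the fiber $g^{-1}(U)$ is a finite \'etale cover, and the components of $Y^2_X$ over $U$ partition the pairs of sheets; transitivizing this gives a genuinely \'etale equivalence relation $R_U$ on $g^{-1}(U)$. I would then argue that since $Y$ is integral and $R_U$ is \'etale, the relation extends to an \'etale equivalence relation on all of $Y$ by taking scheme-theoretic closure and using properness of $g$ to control the fibers over the non-\'etale locus; this is where I would invoke the valuative criterion and the finiteness of $g$ to see that closure does not create new non-\'etale behavior. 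The quotient $D_{Y/X} := [Y/R]$ is then a Deligne--Mumford stack because $R$ is \'etale, and $e: Y \to D_{Y/X}$ is \'etale and surjective by construction.

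Next I would verify the three required properties of $g_1: D_{Y/X}\to X$. That $g_1$ is a local embedding (representable and unramified) follows because $R$ is exactly the fiber product relation controlling non-separatedness of $g$ over the generic locus, so $D_{Y/X}\to X$ becomes generically injective on points; properness of $g_1$ follows from properness of $g$ together with the fact that $e$ is surjective and universally closed quotient maps transfer properness. The generic degree~$1$ claim is the heart: over $U$, the map $g_1: D_{Y/X}\times_X U \to U$ is \'etale and a monomorphism on the generic point precisely because we have quotiented out all the monodromy, so a single point of $U$ has a single preimage in $D_{Y/X}$. For uniqueness, I would show that any factorization $g = g_1'\circ e'$ with $e'$ \'etale epi and $g_1'$ generic degree~$1$ forces $e'$ to coincide with the quotient by $R$: indeed the generic degree~$1$ condition on $g_1'$ means the fibers of $e'$ over a point of $D' := D_{Y/X}'$ are exactly the points of $Y$ mapping to the same generic point of $X$, which is precisely the relation $R$; hence $D' \cong [Y/R]$ canonically.

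The main obstacle I expect is the extension of the \'etale equivalence relation $R_U$ from the generic \'etale locus $U$ to all of $g(Y)$, i.e.\ controlling the behavior of the monodromy components of $Y^2_X$ over the locus where $g$ fails to be \'etale on its image. On that locus the projections $Y^2_X \to Y$ may ramify, so the naive closure of $R_U$ need not remain \'etale or even an equivalence relation without care. The resolution I anticipate is to use that $Y$ is integral (so that the generic relation determines $R$ uniquely as a closure) together with properness and finiteness of $g$ via the valuative criterion, exactly as in the proof of property~(3) above, to ensure that the quotient map $e$ stays \'etale after passing to the closure. A secondary technical point will be checking that $[Y/R]$ is separated enough to make $g_1$ a genuine proper morphism of Deligne--Mumford stacks rather than merely an algebraic space quotient; this should follow from the separatedness built into the definition of $R$ as a closed relation over the diagonal of $X$.
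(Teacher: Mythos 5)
Your strategy is genuinely different from the paper's: the paper obtains existence by citing the atlas-level groupoid construction of \cite{noi1} (Lemma 1.10), where an \'etale cover $U\to X$ splits $Y\times_XU$ into closed subschemes $V_i^a\hookrightarrow U$ with images $W_i$, and $D_{Y/X}$ is presented by the groupoid $[\bigsqcup_iW_i\times_XU\rightrightarrows\bigsqcup_iW_i]$; almost all of the paper's written proof is then the uniqueness, done by building an explicit isomorphism of groupoid presentations. You instead propose the intrinsic description $D_{Y/X}=[Y/R]$, where $R$ is the closure in $Y\times_XY$ of the diagonal together with the components of $Y^2_X$ dominating $Y$. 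That identification of $R$ is correct \emph{a posteriori}: once $D_{Y/X}$ exists, $Y\times_{D_{Y/X}}Y$ is a closed, finite \'etale equivalence relation over $Y$ agreeing generically with the full relation $Y\times_{g(Y)}Y$, and a density argument identifies it with your closure. Your uniqueness argument (generic agreement of two closed \'etale relations of the same degree forces equality) is also essentially sound and is a reasonable intrinsic substitute for the paper's groupoid computation, with the same reliance on separatedness that the paper's Remark flags.

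The genuine gap is exactly at the step you flag as the ``main obstacle,'' and your proposed resolution does not close it. To form $[Y/R]$ you need $R\to Y$ to be finite \emph{\'etale}, i.e.\ flat, over all of $Y$, not just over the locus where $g$ is \'etale on its image. Both projections of $R$ are finite and unramified (being restrictions of $Y\times_XY\rightrightarrows Y$), so the issue is flatness: the number of points of $R$ in the fibre over a point $y_0$ in the non-\'etale locus must equal the generic degree $d$. The valuative criterion, which is what you invoke, only shows that each monodromy partner of a generic point \emph{has} a limit over $y_0$; it does not show that this limit is independent of the branch or valuation chosen, nor that the closures of distinct dominating components of $Y^2_X$ stay disjoint over $y_0$. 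Either failure would make the fibre of $R$ over $y_0$ too large or force collisions, destroying flatness (and with it transitivity of the closed relation), and ruling this out is precisely the content of the existence statement. Integrality of $Y$ gives you uniqueness of $R$ as a closure but says nothing about its flatness. The only way I see to establish this is to pass to an \'etale neighbourhood where $Y\times_XU$ decomposes into closed embeddings $V_i^a\hookrightarrow U$ and read off the relation there --- which is the atlas argument of \cite{noi1} that your approach was meant to avoid. As written, your construction assumes the key fact rather than proving it.
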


\begin{proof}
A factorization of the morphism  $g: Y \to X$ into an \'etale epimorphism  $e:Y\to D_{Y/X}$ and a proper local embedding $g_1: D_{Y/X}\to X$ of generic degree one
 was constructed in \cite{noi1}, Lemma 1.10. It remains to prove uniqueness up to an isomorphism. For this, we first recall the \'etale local structure of $D_{Y/X}$: There exists an \'etale cover of $X$  by a scheme $U$ such that $Y\times_XU = \bigsqcup_{i,a}V_i^a$, and for each $i,a$, the morphism $g_U: Y\times_XU \to U$ restricts to a closed embedding $V_i^a \hookrightarrow U$, with image $W_i$, such that $W_i\not= W_j$ if $i\not= j$. Let $W=\bigcup_iW_i$. There exists a canonical groupoid structure $ \left[s_e, t_e: R_{e}:= \bigsqcup_iW_i \times_XU \rightrightarrows  \bigsqcup_iW_i  \right]$, and $D_{Y/X}$ is defined as its associated stack. The morphisms $e: Y\to D_{Y/X}$ and $g_1: D_{Y/X}\to X$, respectively, are determined by the  canonical choice of  maps $e_U: \bigsqcup_{i,a}V_i^a \to \bigcup_iW_i$ and $g_{1 U}: \bigcup_iW_i \to U$, together with $e_R: \bigsqcup_{i,a, j, b}V_i^a\times_Y V_j^b \to \bigsqcup_iW_i \times_XU$ and $g_{1 R}:  \bigsqcup_iW_i \times_XU \to U\times_X U$ at the level of relations.

 Assume that $e':Y\to Y'$ is another \'etale epimorphism, and that $f':Y'\to X$ is a proper local embedding of generic degree one such that $f'\circ e' = g$. Let $\bigcup_iV'_i := Y'\times_X U$, with the induced morphism $f'_U: \bigcup_iV'_i \to U$, such that each $V'_i$ is the preimage of $W_i$. As the induced morphism $e'_U: \bigsqcup_{i,a}V_i^a \to \bigcup_iV'_i$ is \'etale and surjective and sends each $V_i^a$ to $V'_i$, the components $V'_i$ must be pairwise disjoint. We will construct an isomorphism of groupoids
  \bea  \phi:  \left[s_e, t_e: R_{e}:= \bigsqcup_iW_i \times_XU \rightrightarrows  \bigsqcup_iW_i  \right]    \rightarrow \left[s', t': R':= \bigsqcup_iV'_i \times_{Y'} \bigsqcup_iV'_i \rightrightarrows  \bigsqcup_iV'_i  \right].  \eea
First consider any section  $\sigma: \bigsqcup_iW_i \to \bigsqcup_{i,a}V_i^a$ of $e_U$ and define $\phi_U:= e'_U\circ  \sigma$. As section of the \'etale morphism $e_U$,  the map $\sigma$ must be \'etale itself. In fact, it consists of a choice of an index $a$ for each $i$, and an isomorphism $W_i\to V_i^a$. The map $e'_U$ is \'etale and surjective, and it maps each $V_I^a$ onto $V'_i$. Indeed, $\deg g_{1 U}\circ e_U = \deg f'_U\circ e'_U$ while $\deg g_{1 U}$, $f'_U$ are both of generic degree one, so the image of each $V_I^a$ under $e'_U$ must be a dense open subset of $V'_i$. On the other hand, $e'_U$ is also proper, so $e'_U(V_I^a)=V'_i$.

It follows that $\phi_U= e'_U\circ  \sigma$ is \'etale and surjective as well. Moreover, $f'_U\circ \phi_U =g_{1 U}$, so the degree of $\phi_U$ must be one.
Thus  $\phi_U$ is an isomorphism.

Let $R:=U\times_X U$, and consider the first projection $s:R\to U$. As $R' \cong \bigsqcup_iV'_i\times_U R$, we can construct $\phi_R: R_e \to R'$ as the morphism uniquely defined by the conditions
\bea  f'_R\circ \phi_R = g_{1 R} \mbox{ and } s'\circ \phi_R = \phi_U \circ s_e. \eea
Similarly, a morphism $\psi_R: R'\to R_e$ can be defined by the conditions
\bea  g_{1 R}\circ \psi_R = f'_R \mbox{ and } s_e\circ \psi_R = \phi^{-1}_U \circ s'. \eea
We note that $\psi_R\circ\phi_R =\mbox{ id}_{R_e}$, as  $g_{1 R}\circ \psi_R\circ\phi_R = g_{1 R}$ and $s_e\circ \psi_R\circ\phi_R = s_e$, and $R_e\cong \bigsqcup_iW_i \times_U R$. Similarly, $\phi_R\circ\psi_R =\mbox{ id}_{R'}$.

It remains to prove that the pair $(\phi_U, \phi_R)$ is a morphism of groupoids. This is a slightly long, but direct check. Here we will prove the equality:
  \bean \label{t} \phi_U\circ t_e = t'\circ \phi_R.\eean
Let $\begin{array}{ll} i_e: R_e \to R_e, & i':R'\to R'\end{array}$ and $i:R\to R$ denote the inverting maps of the groupoids $\begin{array}{ll} [R_e \rightrightarrows \bigsqcup_iW_i], & [R'\rightrightarrows \bigsqcup_iV'_i] \end{array} $ and $ [R\rightrightarrows U ]$ respectively, so that $i_e\circ s_e=t_e,$ $ i'\circ s'=t' $ and $i\circ s=t$. Composition with $f'_U$ of the two terms in the equation (\ref{t}) yields:
\bea & f'_U\circ \phi_U\circ t_e =g_{1 U}\circ t_e, \mbox{ and }\\
& f'_U\circ  t'\circ \phi_R=  f'_U\circ   i'\circ s' \circ \phi_R=  f'_U\circ i'\circ \phi_U \circ s_e = & \\
& = i\circ f'_U\circ \phi_U \circ s_e =  i\circ g_{1 U}\circ s_e =g_{1 U}\circ i_e\circ s_e= g_{1 U}\circ t_e.   \eea
As $f'_U$ is generically injective, the closed subset $\begin{array}{ll} \{ x\in R_e; &  \phi_U\circ t_e(x) = t'\circ \phi_R(x)  \} \end{array}$ contains an open dense subset of $R_e$, so it must be the entire $R_e$.

All other compatibility relations follow directly by the same method as above.

\end{proof}

\begin{remark}
We note that if $g: Y \to X$ was not separable, the uniqueness of a possible split $Y\to D_{Y/X} \to X$ would not be guaranteed. For example, if $p\not=q$ are natural numbers and $Y$ is obtained by gluing $pq$ copies of the space $X$ along the complement of a point, then two possible choices for $D_{Y/X}$ would be obtained by gluing $p$, respectively $q$ copies of the space $X$ along the complement of that same point.
\end{remark}

\begin{proposition}\label{properties of split}
Let $g: Y \to X$ be a proper local embedding of Noetherian stacks, with $Y$ integral.
The stack $D_{Y/X}$ constructed above satisfies the following properties:
\begin{itemize}
\item[(1)] For any morphism of stacks $u: X' \to X$ and $Y':= Y\times_{X}X'$,
there exists a morphism $D_u: D_{Y'/X'} \to D_{Y/X}$ making the squares in the following diagram Cartesian:
\bea     \diagram    Y' \dto \rto^{e'} & {D_{Y'/X'}}  \rto^{f'} \dto^{D_u} & {X'} \dto^{u}  \\
Y \rto^{e} & {D_{Y/X}} \rto^{f} & {X}. \enddiagram  \eea
\item[(2)] If $h: Z \to Y$ is another proper local embedding of integral Noetherian stacks, then there exists a natural isomorphism
\bea D_{D_{Z/Y}/D_{Y/X}} \cong D_{Z/X},\eea
where $D_{D_{Z/Y}/D_{Y/X}} $ is the stack associated to the composition $e\circ h_1$ of  the local embedding of generic degree one $h_1: D_{Z/Y}\to Y$  and the \'etale map $e:Y\to D_{Y/X}$.
\item[(3)] There is an equivalence of categories between the category of commutative diagrams
\bea \diagram  Y \rto^{g}  & X \\
               T \uto^{v} \rto^{f} & Z \uto^{u},\enddiagram   \eea
 with $f$ \'etale, and that of pairs in $\Hom(Z, D_{Y/X})\times \Hom(T, Z\times_{D_{Y/X}}Y)$ such that the induced morphism $T \to Z$ is \'etale.

 The morphisms in the first category are given by Cartesian diagrams \bea \diagram    T' \dto^{t} \rto & Z'\dto^{z} \\ T \rto & Z \enddiagram \eea
such that $t$ and $z$ commute with the given morphisms to $Y$ and $X$, respectively.
\end{itemize}
\end{proposition}

\begin{proof}
Properties (1) and (2) are direct consequences of the definition of $D_{Y/X}$ and Lemma \ref{split into etale and generically deg. 1}. The proof of property (3) is based on arguments also employed in the proof of same Lemma. Indeed, given an \'etale cover of $X$  by a scheme $U$ such that $Y\times_XU = \bigsqcup_{i,a}V_i^a$, and for each $i,a$, the morphism $g_U: Y\times_XU \to U$ restricts to a closed embedding $V_i^a \hookrightarrow U$, with image $W_i$, such that $W_i\not= W_j$ if $i\not= j$. Then $D_{Y/X}\times_XU\cong \bigsqcup_iW_i$.  Also, $T\times_Y(\bigsqcup_{i,a}V_i^a)$, and as the morphism $f:T\to Z$ is \'etale, then $Z\times_XU\cong \bigsqcup V'_i$ for some $V'_i$-s such that for each $i$, the pullback of $f$ restricts to maps $\bigsqcup_{a}V_i^a\to \bigsqcup V'_i$, and the pullback of $u$ restricts to $ V'_i \to W_i$. In particular, this induces a map $ \bigsqcup V'_i \to \bigsqcup_iW_i$. A morphism of groupoids
\bea [(\bigsqcup V'_i)\times_Z(\bigsqcup V'_i)\rightrightarrows \bigsqcup V'_i]  \to [ (\bigsqcup_iW_i)\times_{D_{Y/X}}(\bigsqcup_iW_i) \rightrightarrows \bigsqcup_iW_i]\eea
can then be constructed by the exact same method as in the proof of the previous Lemma.
\end{proof}

\subsection{ }

In the next paragraphs we will work with simple categories whose objects are Noetherian stacks, and such that there exists at most one morphism between each pair of objects. We will discuss some additional properties below.

\begin{definition}
By extending the terminology of \cite{lunts}, we can define a poset of stacks as follows. We regard any poset $\cP$ as a category, such that for any elements $I, J \in \cP$, the set of morphisms $\mbox{ Morph}(I,J)$ consists of a unique element if $I \leq J$, and is empty otherwise. Then a poset of stacks is a contravariant functor from $\cP$ to the category of sets. Any such poset of stacks $\cN=\{\phi_J^I: Y_J \to Y_I\}_{I\subseteq J\in \cP}$ where $\cP$ is the power set of a finite set $\Lambda$, and the partial order is given by inclusion, will be called simply a network. In particular, a network will include a unique target $Y_{\emptyset}$, (and a source $Y_{\Lambda}$, possibly empty).

\end{definition}

\begin{definition}

Let $\cN=\{\phi_J^I: Y_J \to Y_I\}_{I\subseteq J\in \cP}$ be a network of morphisms with target $X=Y_{\emptyset}$, and let $\cN'=\{\phi'^I_J: Y'_J \to Y'_I\}_{I,J\in \cP'}$ be another network with target $X'=Y_{\emptyset}$, where  $\cP'\subseteq \cP$. A morphism of networks $F: \cN' \to \cN$ is a fully faithful functor from the category $\cN'$ to the category $\cN$,
given by a set of morphisms $\{ f_I:Y'_I \to Y_I\}_{I\in \cP}$, such that $f_I\circ  \phi'^I_J = \phi_J^I\circ f_J$. In particular, $F$ includes a morphism between targets $f:X'\to X$.

We say that $\cN'\cong \cN\times_XX'$ if each of the commutative diagrams corresponding to the equalities $f_I\circ  \phi'^I_J = \phi_J^I\circ f_J$ is Cartesian.

\end{definition}

Given a network of closed embeddings, there is a natural way to glue any subset of objects $\{ Y_I \}_{I \in \cQ}$ into a stack $S_\cQ$ as follows:

\begin{lemma} \label{gluing in a network}
Let $\cN=\{\phi_J^I: Y_J \to Y_I\}_{I\subseteq J\in \cP}$ be a network of closed embeddings, with target $X$. Consider  $\cQ\subseteq \cP$.

 a) There exists a stack $S_{\cQ}$, and commutative diagrams
\bea \diagram
Y_{I\cup J} \dto_{\phi_{I\cup J}^J} \rto^{\phi_{I\cup J}^I} & Y_{J} \dto \\
Y_{I} \rto & S_{\cQ}
\enddiagram \eea
for all $I, J \in {\cQ}$, such that for any stack $T$, the
 natural functor
 \bea \Hom(S_{\cQ}, T) \to \times_{\{ \Hom(Y_{I\cup J}, T)\}_{I,J \in {\cQ}}}\{ \Hom(Y_J, T)\}_{J\in {\cQ}} \eea
 is an equivalence of categories.

 In particular, there exists a natural morphism $S_{\cQ} \to Y_{(\bigcap_{I\in \cQ}I)}$ compatible with the morphisms $\phi_{I}^{(\bigcap_{I\in \cQ}I)}$, for all $I\in \cQ$.

 Such a stack is unique up to unique isomorphism.

 b) For any morphism $X' \to X$, consider the network  $\cN':= \cN \times_XX'$, with objects $Y'_I= Y_I\times_XX'$. Consider the stack $S'_{\cQ}$, obtained by gluing the objects $\{Y'_I\}_{I\in \cQ}$ in the network $\cN'$. Then for all $I\in \cQ$, the squares in the following diagram are Cartesian:
 \bea \diagram  Y'_I \rto \dto & S'_{\cQ}  \rto \dto & Y'_{(\bigcap_{I\in \cQ}I)} \dto \\
   Y_I \rto  & S_{\cQ}  \rto  & Y_{(\bigcap_{I\in \cQ}I)}. \enddiagram \eea

\end{lemma}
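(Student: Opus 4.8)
The plan is to realize $S_{\cQ}$ as the categorical colimit of the diagram consisting of the objects $\{Y_I\}_{I\in\cQ}$, the overlaps $\{Y_{I\cup J}\}_{I,J\in\cQ}$, and the closed embeddings $\phi^I_{I\cup J},\phi^J_{I\cup J}$, and then to extract from this description the universal property, the uniqueness, and the structure morphism to $Y_{I_0}$, where $I_0:=\bigcap_{I\in\cQ}I$. Two remarks frame everything. Since $\emptyset\le I_0\le I$ for every $I\in\cQ$, the poset structure of the network (at most one arrow between two objects) gives compatible closed embeddings $Y_{I\cup J}\to Y_I\to Y_{I_0}\to X$ whose triangles all commute, so the entire diagram to be glued already lives over $Y_{I_0}$, and in particular over $X$. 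Secondly, choosing an \'etale atlas $p\colon U\to X$ with $U$ a disjoint union of affines $\Spec R$ turns each $Y_I$ into a closed subscheme $\Spec R/\mathfrak a_I\hookrightarrow U$ and each overlap into $\Spec R/\mathfrak b_{I\cup J}$ with $\mathfrak b_{I\cup J}\supseteq\mathfrak a_I+\mathfrak a_J$, reducing the local problem to commutative algebra.

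For part (a) I would build the affine-local model $S^U_\cQ:=\Spec F$, where $F$ is the limit of the diagram of rings, i.e. the kernel
\[
0\to F\to\bigoplus_{I\in\cQ}R/\mathfrak a_I\xrightarrow{\ d\ }\bigoplus_{I,J\in\cQ}R/\mathfrak b_{I\cup J}
\]
of the \v{C}ech-type difference map $d$. Because $\Spec$ carries a finite limit of rings to the corresponding colimit of affine schemes, $S^U_\cQ$ has exactly the stated universal property locally; moreover each projection $F\to R/\mathfrak a_I$ is surjective — one lifts a class through the surjections $R/\mathfrak a_J\to R/\mathfrak b_{I\cup J}$ and uses the poset structure to make the lifts compatible — so the $Y_I$ embed as closed subschemes of $S^U_\cQ$. (Alternatively, in the spirit of Proposition~\ref{lift for morph etale on its image}(10), one may glue inductively by peeling off a maximal $J_0\in\cQ$ and forming a pushout along closed embeddings as in \cite{abramovich}, Appendix~1.) Since the formation of $F$ commutes with the flat base change along the \'etale map $U\times_XU\to U$, the local models descend: the resulting \'etale groupoid $[\,S^{U\times_XU}_\cQ\rightrightarrows S^U_\cQ\,]$ presents a Deligne--Mumford stack $S_\cQ$. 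The global universal property then follows by descent, uniqueness up to unique isomorphism is Yoneda applied to it, and the morphism $S_\cQ\to Y_{I_0}$ is induced by the compatible family $\{\phi^{I_0}_I\}_{I\in\cQ}$.

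For part (b) the universal property furnishes a canonical comparison morphism $S'_\cQ\to S_\cQ\times_XX'$, since the closed embeddings $Y'_I=Y_I\times_XX'\to S_\cQ\times_XX'$ agree on overlaps and hence factor through the colimit $S'_\cQ$; both asserted squares reduce to showing this comparison is an isomorphism. Here lies the real difficulty. On the atlas $U':=U\times_XX'$ the comparison is the ring map $F\otimes_RR'\to F'$ onto the fibre-product ring $F'$ of the pulled-back ideals, and by the displayed Mayer--Vietoris presentation it is an isomorphism exactly when that sequence stays exact after $-\otimes_RR'$, i.e. when the relevant $\mathrm{Tor}^R_1$ against the gluing loci vanish. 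This is \emph{not} automatic: a non-flat base change can thicken a colimit — gluing two affine lines at a point and then restricting to that point yields a length-two scheme rather than a reduced point — so colimits along closed embeddings do not commute with arbitrary base change in general.

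Consequently the heart of the proof is to establish this $\mathrm{Tor}$-independence for the networks actually at hand. That the individual objects base-change correctly is guaranteed by Proposition~\ref{lift for morph etale on its image}(6), but the colimit $S_\cQ$ inherits good base-change behaviour only if, in addition, the gluing loci $Y_{I\cup J}$ are Tor-independent from $X'$ over $X$, so that the displayed Mayer--Vietoris sequence stays left-exact after $-\otimes_RR'$. I would verify this transversality \'etale-locally from the explicit description of the strata of these networks, and then descend along $U'\to X'$ to obtain the Cartesian squares of (b). I expect this to be the main obstacle — rather than the formal colimit bookkeeping of part (a) — together with checking that the projections $F\to R/\mathfrak a_I$ are genuinely surjective, i.e. that the legs $Y_I\to S_\cQ$ are closed embeddings; both are ultimately controlled by the affine models above.
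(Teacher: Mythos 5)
For part (a) your route is genuinely different from the paper's and is viable. The paper builds $S_{\cQ}$ by induction on $|\cQ|$: it adjoins one new index $J$ at a time, first forms (by the inductive hypothesis) the glued stack $S_{\cQ^J}$ of the overlaps $\{Y_{I\cup J}\}_{I\in\cQ}$, notes that the compositions $Y_{J\cup K}\to Y_K\to S_{\cQ}$ give a closed embedding $S_{\cQ^J}\to S_{\cQ}$, and then performs a single pushout of $Y_J\leftarrow S_{\cQ^J}\rightarrow S_{\cQ}$ along closed embeddings via \cite{abramovich}, Proposition A.1.1 --- essentially the alternative you mention only in parentheses. Your global \v{C}ech/limit-of-rings model plus \'etale descent reaches the same object with the same universal property; the price is that you must check by hand that the projections $F\to R/\mathfrak{a}_I$ are surjective, and the compatibility of lifts across several overlaps (which is exactly where the poset structure and an induction would re-enter) is dispatched in one clause. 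The paper's two-object pushout gets the closed embeddings of the legs for free. This is a difference of bookkeeping, not of substance.

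For part (b) there is a genuine gap, and it is the one you yourself flag: you reduce the statement to a Tor-independence assertion for the gluing loci, declare it ``the heart of the proof'', and then leave it unproved. A proposal cannot end by postponing its own crux; you must either verify the transversality from the explicit \'etale-local structure of the networks to which the lemma is applied (where the $Y_I$ are \'etale-locally disjoint unions of the strata $W_I$ and the overlaps are genuine fibre products), or find an argument that avoids it. Your illustrative example is also miscomputed: restricting the coordinate cross $\Spec k[x,y]/(xy)$ to the glued point returns a reduced point; the failure you are worried about appears instead for pullback along, say, the line $x=y$ through the origin of the two coordinate axes in $\AA^2$, where $S_{\cQ}\times_X X'$ is a length-two point while the glued pullbacks give a reduced point. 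That said, your instinct is sound and in fact sharper than the paper's own treatment: the paper's proof of (b) asserts that any $T$ mapping to $S_{\cQ}$ ``can be thought of as obtained by gluing the objects $T\times_{S_{\cQ}}Y_I$'', which is precisely the commutation of the colimit with the base change $T\to S_{\cQ}$ that you correctly identify as non-formal. So you have located the real issue more honestly than the text does, but you have not resolved it.
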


\begin{proof}
a) We will proceed by induction on the cardinality of ${\cQ}$. If $|{\cQ}|=1$, then $S_{\cQ}=Y_I$ for $I \in {\cQ}$. Assume now that $S_{\cQ}$ exists for any ${\cQ}$ of a given cardinality. Fix such ${\cQ}$ and let $J\not\in {\cQ}$. Note that if $J \supseteq I$ for some $I \in {\cQ}$, then $S_{{\cQ}\cup\{J\}}=S_{\cQ}$. If this is not the case, let ${\cQ}^J:=\begin{array}{ll} \{ I \bigcup J;   & I\in {\cQ} \} \end{array}$. Then by induction, $S_{{\cQ}^J}$ exists and, moreover, there is a unique closed embedding $S_{{\cQ}^J} \to S_{\cQ}$ determined by the compositions $Y_{J\cup K} \to Y_{K} \to S_{\cQ}$ for all $K\in {\cQ}$. Gluing $Y_J$ and $S_{\cQ}$ along
$S_{{\cQ}^J}$ yields a stack satisfying the required properties (conform \cite{abramovich}, Proposition A.1.1).

Part b) follows by standard category theoretical arguments. Indeed, since $Y'_I\cong Y_I\times_{Y_{(\bigcap_{I\in \cQ}I)}}Y'_{(\bigcap_{I\in \cQ}I)}$, it is enough to show that the right side of the diagram is Cartesian. Given two morphisms $T\to S_{\cQ}$ and $T\to Y'_{(\bigcap_{I\in \cQ}I)}$ commuting to the respective morphisms to $Y_{(\bigcap_{I\in \cQ}I)}$, we can think of $T$ as being obtained by gluing the objects $\{T\times_{S_{\cQ}}Y_I\}_{I\in \cQ}$ within the network whose objects are $\{T\times_{S_{\cQ}}Y_J\}_{J \supseteq I \mbox{ for some } I \in \cQ}$, and the target $T$. Each such object $T\times_{S_{\cQ}}Y_J$ admits two natural morphisms, to $Y_I$ and $Y'_{(\bigcap_{I\in \cQ}I)}$, respectively, commuting to the respective morphisms to $Y_{(\bigcap_{I\in \cQ}I)}$, and thus admits natural morphisms $T\times_{S_{\cQ}}Y_J\to Y'_I \to S'_{\cQ}$, for $J\supseteq I \in \cQ$. By part a), there exists a unique natural morphism $T \to S'_Q$ compatible with $T\to S_{\cQ}$ and $T\to Y'_{(\bigcap_{I\in \cQ}I)}$.

\end{proof}

Consider a proper local embedding of Noetherian stacks
$g: Y \to X$, with $Y$ integral. Starting from the flat stratification of $g$,  in \cite{noi1}, we constructed a
network of local embeddings associated to $g$, and an \'etale lift $F_{Y/X}\to X$ which reflected the local \'etale
structure of the morphism $g$. However, this lift was not canonical, as it depended of the choice of \'etale cover by
a nice scheme $U$ of $X$. We recall here the properties of $U$ which were essential for the construction of $F_{Y/X}$.

Let $ Y_n \hookrightarrow Y_{n-1}
\hookrightarrow ... \hookrightarrow  Y_1 \hookrightarrow Y_0=Y$ be a
filtration of $Y$ consisting of the closures $\ol{g^{-1}(S_i)}\subseteq Y$,  where $\{S_i\}_i$ is the flattening stratification for
the morphism $Y \to g(Y)$.

\begin{definition} \label{U} Let $g: Y \to X$ be a proper local embedding of Noetherian stacks.
An \'etale cover $U$ of $X$ is called suitable for the morphism $g$ if the following properties hold:
\begin{enumerate}
\item $g(Y)\times_XU = \bigcup_{l\in L}  W_l$, where $W_l$ are isomorphic, for all $l\in L$.
\item For all $k=0, ..., n$, and for some suitable choices of subsets $\cP_k \subset \cP:=\cP(L)$, we have $ \bigcup_{I \in \cP_k} W_I = g(Y_k)\times_XU $, where
 $W_I= \bigcap_{l\in I} W_l$, and $W_I \cong W_{I'}$ for all $I, I'\in \cP_k$.
\item For each $I$ as above, there exist sets
 $\{ V_I^{a} \}_{a\in A_I }$ mapping onto $Y_k$, with isomorphisms $ V_I^{a} \to W_I$ standing over $g_k: Y_k \to g(Y_k)$, and
 satisfying  $$Y_k\times_XU = \bigcup_{I\in \cP_k, a\in A_I }V_I^{a}.$$
  Here $A_I=\bigsqcup_{k\in I} A_k$ and $V_I^{a} \subseteq V_k^{a}$ if $k\in I$ and $a\in A_k$.
\end{enumerate}
\end{definition}

\begin{definition}\label{network} Let $X$ and $Y$ be Noetherian stacks, with $Y$ integral. Let $g: Y \to X$ be a proper local embedding of generic degree one.
  Let $U\to X$ be a suitable \'etale cover for $g$.
We associate to $g$ and $U$ a network of local embeddings
$\phi_J^I: Y_J \to Y_I$, one for each pair $I\subseteq J$, $I \in \cP_i$ and $J\in \cP_j$, as follows. For each $I\subseteq L$, for each distinct $i,j\in L$ and the uniquely associated indices $a\in A_i$, $b\in A_j$,
 we define \bea   \begin{array}{lll} R_{\emptyset}:= U\times_XU, &  R_i := V^a_i\times_YV^a_i & \mbox{ and }  R_I:=R_I = ( \prod_{i\in I} )_{R_{\emptyset} } R_i,\end{array} \eea
and $Y_I$ as the stack with groupoid presentation $ \left[  R_I \rightrightarrows   V_I^a \right].$ We consider by convention $V_{\emptyset}^a =U$, such that $Y_{\emptyset}=X$.
We note that \bea \label{R_I} R_I \cong W_I\times_X W_I \setminus \bigcup_{j\not= i \in I} S_{ij}^{ab}, \mbox{ where }  S_{ij}^{ab} := \Im ( V_i^a \times_Y V_j^b \to W_i \times_X W_j).  \eea
 Whenever $J\supseteq I$, the natural morphism between the groupoid presentations
  $ \left[  R_J \rightrightarrows V^a_J \right]$ and $\left[  R_I \rightrightarrows V^a_I \right]$  induces the  morphism of stacks $\phi_J^I: Y_J \to Y_I$. In particular, $\phi_I^I=\mbox{id}_{Y_I}$. The space  $Y_{\emptyset}=X$ will be called the target of the network.

\end{definition}

\begin{definition}
Consider a proper local embedding of Noetherian stacks
$g: Y \to X$,  with $Y$ integral. If $g$ factors through  an \'etale epimorphism $e: Y\to D_{Y/X}$ and a proper local embedding $g_1: D_{Y/X}\to X$ of generic degree 1, then we define $Y_I:=D_{Y/X, I}\times_{D_{Y/X}}Y$, for the network consisting of $\{ D_{Y/X, I},  \varphi_J^I\}_{I\subseteq J\not=\emptyset}$ constructed as in the preceding definition,  a target  $Y_{\emptyset}=X$ and the morphisms $g_i:Y_i\to X$. The morphisms  $\phi_J^I: Y_J \to Y_I$ are also obtained by pull-back from the network of $D_{Y/X}$.
\end{definition}


\begin{remark} Even though each space $Y_I$ in the network of $g$ and $U$ is intrinsic to the morphism $g$ (\cite{noi1}, Corollary 2.8), the network itself depends on the choice of the suitable cover $U$,  inasmuch as the number of copies of the same space $Y_I$ can vary from network to network. For example, if we replace $U$ by a disjoint union of $m$ copies of $U$, where $m$ is a positive integer, then the network of $g, U$ is replaced by $m$ of its copies, with the exception of the final target $X$ which is unique. In the next proposition we will show that there is, however, a canonical choice of a minimal network for the morphism $g$, which will make the subsequent construction of an \'etale lift of $g$ canonical, too.

\end{remark}

\begin{notation} Consider now a  proper local embedding of Noetherian stacks $g:Y\to X$ of generic degree one, with $Y$ integral.
 For every natural number $n$, we denote by $\prod_X^n Y$ the fibered product  over $X$ of $n$ copies of $Y$. We denote by $\Delta_n$ the union of the images of all diagonal morphisms $\prod_X^m Y \to \prod_X^n Y $ for $m\leq n$, and by $Y^n$ the complement of $\Delta_n$ in $\prod_X^n Y$.
 \end{notation}

 \begin{lemma}
 $Y^n$ is a closed substack of $\prod_X^n Y$.
 \end{lemma}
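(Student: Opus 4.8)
The plan is to reduce the assertion to the clopen‑ness of the diagonal of $g$ inside $Y\times_XY$. Since $g$ is a local embedding, it is unramified, and hence its diagonal $\Delta\colon Y\to Y\times_XY$ is an open immersion; since $g$ is proper it is in particular separated, so $\Delta$ is also a closed immersion. Both properties are representable and \'etale‑local on $X$, so they may be checked after base change along an atlas $U\to X$, where $Y\times_XU\to U$ is an unramified, separated morphism of algebraic spaces and the classical characterizations apply. Consequently $\Delta(Y)$ is an open‑and‑closed substack of $Y\times_XY$, with complement exactly the stack $Y^2_X$ introduced earlier.

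Next I would describe $\Delta_n$ as a finite union of preimages of this clopen locus. For each pair $1\le i<j\le n$, let $p_{ij}\colon \prod_X^n Y\to Y\times_XY$ be the projection onto the $i$-th and $j$-th factors. The partial diagonal morphism $\prod_X^{n-1} Y\to\prod_X^n Y$ that repeats one factor in positions $i$ and $j$ identifies $\prod_X^{n-1} Y$ with the fibre product $\prod_X^n Y\times_{Y\times_XY,\,\Delta}Y$, so its image is precisely the substack $p_{ij}^{-1}(\Delta(Y))$. The image of any diagonal $\prod_X^m Y\to\prod_X^n Y$ with $m<n-1$ forces at least two coincidences among the factors and is therefore contained in the intersection of two such loci; hence no diagonal with $m<n-1$ contributes anything new, and set‑theoretically
\[
\Delta_n=\bigcup_{1\le i<j\le n} p_{ij}^{-1}\bigl(\Delta(Y)\bigr).
\]

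Finally, since $\Delta(Y)$ is open in $Y\times_XY$ and each $p_{ij}$ is a morphism of stacks, every $p_{ij}^{-1}(\Delta(Y))$ is an open substack of $\prod_X^n Y$; a finite union of open substacks is open, so $\Delta_n$ is an open substack and its complement $Y^n$ is a closed substack, as claimed. Because $\Delta(Y)$ is moreover closed, $\Delta_n$ is in fact clopen and $Y^n$ is simultaneously open, though only closedness is needed here. I expect the only point requiring genuine care, rather than a true obstacle, to be the identification of the image of each partial diagonal with $p_{ij}^{-1}(\Delta(Y))$ at the level of stacks; this is handled by the fibre‑product description above together with the fact that $\Delta$ is an immersion, and by the \'etale‑local reduction needed to know that $\Delta$ is an open immersion in the first place.
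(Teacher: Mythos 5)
Your proof is correct and follows essentially the same route as the paper: both arguments reduce the lemma to showing that the image of the diagonal $Y\to Y\times_X Y$ is open and closed (closedness from properness/separatedness of $g$), and then observe that $\Delta_n$ is a finite union of loci pulled back from this clopen substack. The only real difference is in the openness step, which you deduce from the standard characterization of unramified morphisms (the diagonal is an open immersion), whereas the paper verifies it by a direct computation on a suitable \'etale atlas, exhibiting $V_i\times_Y V_j$ as a union of connected components of $V_i\times_X V_j$; your citation is cleaner, the paper's computation is self-contained.
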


\begin{proof}
We only need to check that the image of the diagonal morphism $Y\to Y\times_XY$ is both open and closed in $Y\times_XY$. Then, by  induction on $n$  we obtain that $\Delta_n$ is a union of connected components of  $\prod_X^n Y$ for any $n>1$.
Indeed, since $g:Y\to X$, then so is $Y\to Y\times_XY$. On the other hand, to prove that the image of this morphism is open,
we choose any cover $U$ of $X$ suitable for the morphism $g$. Let $V=\bigsqcup_iV_i:=Y\times_XU$, such that each $V_i$ is imbedded as a closed subscheme of $U$. For any indices $i,j$ as above, $V_i\times_XV_j$ is an \'etale cover of $Y\times_XY$, and $(V_i\times_XV_j)\times_{Y\times_XY}Y \cong V_i\times_YV_j$. On the other hand,
\bea \bigsqcup_j(V_i\times_YV_j) = V_i\times_YV\cong V_i\times_Y(Y\times_XU)\cong V_i\times_XU\cong \bigcup_j(V_i\times_XV_j),\eea
and so $V_i\times_XV_j \cong (V_i\times_YV_j) \bigsqcup (\bigsqcup_{k}(V_i\times_YV_k) \bigcap (V_i\times_XV_j))$.
\end{proof}

\begin{definition} \label{the canonical network}
  Let $n_g$ be the largest integer such that $Y^{n_g}$ is non-empty.  We denote by $\cN(Y/X)$ the network made out of stacks $Y_J:=Y^{|J|}$, for any $J\subseteq \{1,...,n_g \}$, and of morphisms
  $\phi_J^I: Y_J \to Y_I$, defined by restrictions of the natural projections, for $I\subseteq J$. Here $Y_{\emptyset}=X$ and $\phi_i^{\emptyset}=g$ for any $i \in \{1,...,n_g \}$.
  For a general proper local embedding $g$, let $\cN(Y/X):=\cN(D_{Y/X}/X)\times_{D_{Y/X}}Y$.

   $\cN(Y/X)$ will be called the canonical network of the the finite local embedding $g:Y \to X$.
\end{definition}

\begin{proposition} \label{properties of the canonical network}

a) There exists an \'etale cover $U$ of $X$ suitable for $g$ such that $\cN(Y/X)$ is the network of local embeddings associated to $g, U$.

b) If $X'\to X$ is a morphism and $Y'\cong Y\times_XX'$, then $\cN(Y'/X')\cong \cN(Y/X)\times_XX'$.
\end{proposition}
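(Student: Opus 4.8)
The plan is to prove the two parts separately, reducing each to the case when $g$ has generic degree one, and then to argue intrinsically. First I would dispose of the general case by the pullback definitions: by Definition \ref{the canonical network} together with the definition following Definition \ref{network}, both $\cN(Y/X)$ and the network attached to a pair $(g,U)$ are obtained from the corresponding constructions for the generic-degree-one morphism $g_1\colon D_{Y/X}\to X$ by applying $-\times_{D_{Y/X}}Y$ to every object and morphism. Moreover, an \'etale cover $U\to X$ suitable for $g_1$ is suitable for $g$, since $g=g_1\circ e$ with $e\colon Y\to D_{Y/X}$ \'etale. Hence it suffices to prove a) for $g$ of generic degree one with $Y$ integral, the general statement following by applying $-\times_{D_{Y/X}}Y$ to the resulting network isomorphism.

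Assume now $g$ of generic degree one. The key step is the identification $Y_I\cong Y^{|I|}$ for the objects of Definition \ref{network}. Recall that $Y_I$ has the groupoid presentation $[R_I\rightrightarrows V_I^a]$ with $V_I^a\cong W_I$ and $R_I\cong W_I\times_X W_I\setminus\bigcup_{j\neq i\in I}S_{ij}^{ab}$. Pulling $Y^{|I|}$ back to $U$, and using that each sheet of $Y\times_X U$ is a closed embedding into $U$ with image some $W_l$, the product $(\prod_X^{|I|}Y)\times_X U$ decomposes as a disjoint union indexed by ordered tuples of sheet labels; removing the diagonals $\Delta_{|I|}$ leaves exactly the tuples of distinct labels, whose pieces are the intersections $W_I$ glued along the relations $R_I$. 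This exhibits $[R_I\rightrightarrows W_I]$ as an \'etale presentation of $Y^{|I|}$, giving $Y_I\cong Y^{|I|}$. I would then choose $U$ so that its set of sheets $L$ is exactly $\{1,\dots,n_g\}$ and $W_I\neq\emptyset$ for every $I$: over the deepest stratum $Y^{n_g}$ the $n_g$ coordinate projections furnish $n_g$ tautological sections, so $Y\times_X Y^{n_g}$ is a disjoint union of $n_g$ globally labelled sheets, and I would spread this labelling out to a genuine suitable cover $U\to X$ with $g(Y)\times_X U=\bigcup_{l=1}^{n_g}W_l$ coherently labelled and all meeting over the image of $Y^{n_g}$. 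For such $U$ the index poset $\cP(L)$ equals $\cP(\{1,\dots,n_g\})$, each $\cP_k$ is the full collection of $k$-element subsets, and the two networks share the same index poset.

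It remains to match the morphisms: for $I\subseteq J$ the transition $\phi_J^I$ of Definition \ref{network} is induced by the inclusion of presentations $[R_J\rightrightarrows V_J^a]\to[R_I\rightrightarrows V_I^a]$, which under the identification above is precisely the coordinate-forgetting projection $Y^{|J|}\to Y^{|I|}$ defining $\cN(Y/X)$; this is a direct check on the two presentations. The main obstacle is the construction in the previous paragraph of a suitable cover $U$ with exactly $n_g$ globally labelled sheets whose every intersection locus $W_I$ realises the full $Y^{|I|}$, that is, globalizing the ordering of sheets available only at the deepest stratum to an honest \'etale cover of all of $X$; this minimality is exactly what makes the canonical network canonical.

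For b) I would once more reduce to generic degree one, where $Y_J=Y^{|J|}$, and the statement becomes the formal compatibility of $Y^n$ with base change. Since fibre products and diagonals commute with base change, $(\prod_X^n Y)\times_X X'\cong\prod_{X'}^n Y'$ carries $\Delta_n$ to the corresponding $\Delta_n'$, whence $Y^n\times_X X'\cong (Y')^n$, and the coordinate projections pull back to the coordinate projections; this gives $\cN(Y/X)\times_X X'\cong\cN(Y'/X')$, the objects indexed by $J$ with $|J|>n_{g'}$ simply becoming empty and dropping out of the index poset of $\cN(Y'/X')$. For general $g$, I combine the definition $\cN(Y/X)=\cN(D_{Y/X}/X)\times_{D_{Y/X}}Y$ with the base-change isomorphism $D_{Y'/X'}\cong D_{Y/X}\times_X X'$ of Proposition \ref{properties of split}(1) and the generic-degree-one case, so that the two pullbacks compose to yield $\cN(Y'/X')\cong\cN(Y/X)\times_X X'$. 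The only delicate point is that base change may lower the depth, $n_{g'}<n_g$, which is handled by the empty-object remark above.
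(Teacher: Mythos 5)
Your reduction to the generic-degree-one case, the identification $Y_I\cong Y^{|I|}$ via the groupoid presentations $[R_I\rightrightarrows W_I]$, and your treatment of part b) (fibre products and diagonals commute with base change, combined with $D_{Y'/X'}\cong D_{Y/X}\times_XX'$) all match the paper's intent; the paper simply cites \cite{noi1} (Proposition 1.9 for existence of suitable covers and Corollary 2.8 for the identification of the nodes and projections) rather than re-deriving these points. The genuine gap is exactly where you flag it yourself: the production of a suitable cover $U$ with precisely $n_g$ sheets. Your proposed route --- extracting $n_g$ tautological sections over the deepest stratum $Y^{n_g}$ and ``spreading this labelling out'' to all of $X$ --- is not carried out, and as described it cannot work directly: the image of $Y^{n_g}$ in $X$ is a closed stratum, so there is no open neighbourhood of it over which the labelling extends, and away from it the sheets are only \'etale-locally distinguishable, which is the entire reason a suitable cover is needed in the first place. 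You have restated the difficulty rather than resolved it.

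The paper goes the other way around: it starts from an \emph{arbitrary} suitable cover $U'$ (existence being Proposition 1.9 of \cite{noi1}, a nontrivial input you would need in any case) with $g(Y)\times_XU'=\bigcup_{l=1}^{m}W_l$, and if $m>n_g$ it \emph{prunes}, setting $U:=U'\setminus(\bigcup_{l=n_g+1}^{m}W_l)$. This $U$ is open in $U'$, hence still \'etale over $X$, and it is still surjective because the maximality of $n_g$ together with property (2) of Definition \ref{U} guarantees that no point of $X$ is covered only by the discarded sheets; since $W_I\cong W_{I'}$ whenever $|I|=|I'|$, the pruned cover yields a network with exactly the nodes and morphisms of $\cN(Y/X)$. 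If you replace your ``spreading out'' paragraph by this pruning argument (quoting the existence of some suitable cover), the rest of your proof goes through.
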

\begin{proof}
 Consider any   \'etale covering $U'$ of $X$ suitable for $g$.  At least one such cover exists, by Proposition 1.9 in \cite{noi1}.   Let  $\phi'^{I'}_{J'}: Y_{J'} \to Y_{I'}$ denote the morphisms in the associated network. By examining the respective groupoid presentations it can be proven (\cite{noi1}, Corollary 2.8.) that the spaces $ Y_{J'}$ are isomorphic to $Y^{n}$ and, moreover, by the same proof, the morphisms are restrictions of projections as above. It remains to check that, after possibly "pruning" $U'$ , the associated network has the required set of nodes and morphisms. Indeed, assume that $g(Y)\times_XU'=\bigcup_{l\in \{1,...,m\}} W_l$, with $W_l$ as in Definition \ref{U}. If $m>n_g$, let $U:=U'\setminus (\bigcup_{l=n_g+1}^m W_l)$. The induced map $U \to X$ is \'etale and also surjective, due to the maximality of $n_g$ and to property (2) in Definition \ref{U}.
As $W_I \cong W_{I'}$ whenever $|I|=|I'|$, then the network associated to $U$ has exactly the right number of nodes and morphisms as  $\cN(Y/X)$.
The second statement is due to the definition of $\cN(Y/X)$ and Proposition \ref{split into etale and generically deg. 1}.
\end{proof}

\begin{lemma} \label{subnetworks}
Let $g:Y \to X$ be a finite local embedding of Noetherian stacks, and let $\cN(Y/X)=\{ \phi_J^I: Y_J \to Y_I \}_{I\subseteq J\subseteq \{1, ..., n_g\} }$ be its canonical network. Then for any integer $k$ with $0\leq k< n_g$, the projection morphism
 and for any $K\subseteq L \subseteq \{1, ..., n_g\}$ with $|K|=k$ and $|L|=k+1$,  the morphism $\phi^K_L: Y_{L} \to Y_{K}$ is a finite local embedding with associated canonical network
\bea   \cN(Y_L/Y_{K})=\{ \phi_J^I: Y_J \to Y_I \}_{K\subseteq I\subseteq J\subseteq \{1, ..., n_g\} }.    \eea
Here by convention $Y_0=X$.
\end{lemma}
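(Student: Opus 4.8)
The plan is to reduce immediately to the case where $g$ has generic degree one, and then to read off the canonical network of $\phi^K_L$ from a moduli description of the complement-of-diagonals stacks. For the reduction, recall that for a general $g$ one has $Y_J=D_{Y/X,J}\times_{D_{Y/X}}Y$, so that $\phi^K_L$ is obtained from the corresponding morphism $D_{Y/X,L}\to D_{Y/X,K}$ by base change along $Y_K\to D_{Y/X,K}$; by the base-change compatibility of canonical networks (\propref{properties of the canonical network}(b)) it is then enough to prove the statement for the generic degree one morphism $D_{Y/X}\to X$ and to pull back, using $D_{Y/X,J}\times_{D_{Y/X,K}}Y_K\cong D_{Y/X,J}\times_{D_{Y/X}}Y=Y_J$. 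So I assume from now on that $g$ has generic degree one, in which case $Y_J=Y^{|J|}$ (the case $k=0$ being tautological, since then $\cN(Y_L/Y_K)=\cN(Y/X)$).

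First I would check that $\phi^K_L\colon Y_L\to Y_K$ is a finite local embedding. Writing $L=K\cup\{m\}$ and denoting by $\eta$ the coordinate of the $m$-th factor, I would identify $Y_L=Y^{k+1}$ with the open and closed substack of $Y_K\times_X Y=Y^{k}\times_X Y$ obtained by removing the $k$ partial diagonals $\{\eta=\xi_i\}_{i\in K}$. Each of these is the preimage of the image of $\Delta\colon Y\to Y\times_X Y$ under the projection $Y^{k}\times_X Y\to Y\times_X Y$ onto the $i$-th and the $m$-th factor, hence open and closed by the Lemma establishing that $\Delta(Y)$ is open and closed in $Y\times_X Y$. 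Since $Y^{k}\times_X Y\to Y^{k}$ is the base change of $g$ along $Y^{k}\to X$, it is a finite local embedding, and its restriction to the open and closed substack $Y_L$ is one as well.

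The heart of the matter is to identify the canonical network of $\phi^K_L$, whose level $j$ object is the complement-of-diagonals stack $(Y_L)^{j}_{Y_K}\subseteq\prod_{Y_K}^{j}Y_L$. Here I would argue on moduli: an object of $\prod_{Y_K}^{j}Y_L$ is a distinct $K$-tuple $(\xi_i)_{i\in K}$ together with $j$ further objects $\eta^{(1)},\dots,\eta^{(j)}$ over the same object of $X$, each distinct from every $\xi_i$ (this last being exactly the condition of lying in $Y_L$). Two copies of $Y_L$ agree over $Y_K$ precisely when the corresponding $\eta^{(a)}$ agree, so removing all diagonals forces $\eta^{(1)},\dots,\eta^{(j)}$ to be pairwise distinct; combined with the distinctness of $(\xi_i)_{i\in K}$ and with $\eta^{(a)}\neq\xi_i$, the resulting $(k+j)$-tuple is totally distinct. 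Hence $(Y_L)^{j}_{Y_K}\cong Y^{k+j}=Y_{K\cup J'}$ for any $J'\subseteq\{1,\dots,n_g\}\setminus K$ with $|J'|=j$, and forgetting a subset of the new coordinates corresponds to the projections $\phi^I_J$. Thus, as a network, $\{(Y_L)^{j}_{Y_K}\}_{j}$ is exactly the sub-poset $\{\phi^I_J\colon Y_J\to Y_I\}_{K\subseteq I\subseteq J\subseteq\{1,\dots,n_g\}}$, and in particular $n_{\phi^K_L}=n_g-k$.

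The hard part will be upgrading this moduli bijection to a canonical isomorphism of stacks, together with the attendant bookkeeping of diagonals. I would carry it out through the groupoid presentations of Definition \ref{network}: the relations of $(Y_L)^{j}_{Y_K}$ are fiber products over $R_{\emptyset}=U\times_X U$ of the relations $R_i$ attached to the coordinates of $L$ and to the new copies, and one must verify that the diagonals deleted in $\prod_{Y_K}^{j}Y_L$ are cut out by precisely the loci $S_{ij}^{ab}$ of \Ref{R_I}, so that the resulting groupoid is literally that presenting $Y^{k+j}$. This comparison is direct but must be done with care; once it is in place, compatibility with the projections is formal, and the general case follows from the base-change reduction of the first paragraph.
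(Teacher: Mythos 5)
Your argument is correct and is essentially the paper's own: the paper's entire proof consists of the observation that $\prod^l_{\prod^k_XY}\prod^{k+1}_XY\cong\prod^{k+l}_XY$ canonically and compatibly with the projections and the diagonal morphisms, which is exactly your identification $(Y_L)^j_{Y_K}\cong Y^{k+j}$ phrased on objects. The extra steps you supply --- the reduction to $D_{Y/X}$ via \propref{properties of the canonical network}(b), and the check that $\phi^K_L$ is a finite local embedding using that $\Delta(Y)$ is open and closed in $Y\times_XY$ --- are left implicit in the paper, and the final ``upgrade'' you flag as the hard part is already handled by the canonicity (associativity) of fiber products, so the groupoid-level comparison you propose is not actually needed.
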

\begin{proof}
The lemma is due to the existence of canonical isomorphisms $\prod^l_{\prod^k_XY}\prod^{k+1}_XY \cong \prod^{l+k}_XY$, which commute with the projections $\prod^l_{\prod^k_XY}\prod^{k+1}_XY \to \prod^{l-1}l_{\prod^k_XY}\prod^{k+1}_XY$ and $\prod^{l+k}_XY\to \prod^{l+k-1}_XY$, respectively, and with the respective diagonal morphisms.

\end{proof}

\begin{definition}\label{define all networks}
Consider a network $\cN$ of proper local embeddings $\phi_J^I: Y_J \to Y_I$ for
$I\subseteq J\in \cP$ associated to a
proper local embedding $g: Y\to X$, where by
convention $Y_{\emptyset }=X$. We will briefly describe here an \'etale lift $\cN^0$ of $\cN$ which is a configuration stack, namely a network of closed embeddings
$\cN^0=  \{  \phi_J^{I 0}: Y_J^0 \hookrightarrow Y_I^0   \}$, and a morphism $p^0:\cN^0 \to \cN$ which is \'etale, in the sense that each of the constituent morphisms is \'etale. A detailed proof of the existence of this network based on \'etale coverings can be found in \cite{noi1}, (Theorem 1.5).
  $\cN^0$ is in fact the last of a sequence of networks $\{\cN^i\}_{n_g\geq i\geq 0}$ constructed inductively, where $\cN^{n_g}:=\cN$, and for each index $i$,
  \begin{enumerate}
   \item the morphisms  $\phi_J^{I i}: Y_J^i \hookrightarrow Y_I^i$  are closed embeddings for all $I, J$ such that $ J\supseteq I\in \cP_k$ with $k\leq i$;
  \item there is an \'etale morphism $\cN^{i-1} \to \cN^i$.
  \end{enumerate}
The sequence is constructed as follows. Assume that $\cN^i$ with the property $(2)$ above has been defined. For each $I\in \cP$, we denote by $S_I^i$ the stack obtained by gluing all stacks $Y^i_J$ satisfying $J\supset I$ like in Lemma \ref{gluing in a network}. Each $S_I^i$ comes with a map $S_I^i \to Y_I^i$ which is in fact proper and \'etale on its image, and the set of all these maps defines a map of networks $\cS^i \to \cN^i$. With the notations from Proposition \ref{lift for morph etale on its image}, we define  $\cN^{i-1} := F_{\cS^i/\cN^i}$ in the obvious sense: each $Y_I^{i-1}= F_{S_I^i/Y_I^i}$, the \'etale lift of $S_I^i\to Y_I^i$. Thus there exists a natural \'etale map $\cN^{i-1} \to \cN^i$. We note that for $I\in \cP_k$ with $k\geq i$, the morphism  $S_I^i\to Y_I^i$ is already a closed embedding, so $Y_I^{i-1}=Y_I^i$, while for $I \in \cP_{i-1}$ and $J \supseteq I$, we have $Y_J^{i-1}=Y_J^i \hookrightarrow S_I^{i} \hookrightarrow Y_I^{i-1}$, a closed embedding.

\end{definition}

\begin{definition} Consider a proper local embedding $g: Y\to X$. Let $\{Y^{a}\}_a$ denote the set of irreducible components of $Y$, and for each $a$ let $\cN^i(Y^{a}/X)=\{  \phi_J^{a, I, i}: Y_J^{a,i} \hookrightarrow Y_I^{a,i} \}_{J\supseteq I; I, J\in \cP(\Lambda^a)}$ denote the networks associated to the restriction of $g$ on $Y^a$, as in the previous definition. Here $0\leq i\leq |\Lambda^a|$, and $|\Lambda^a|$ is the largest number such that $(Y^{a})^n$ is nonempty for all $n\leq |\Lambda^a|$. We define the canonical network of the local embedding $g$ by
\bea \cN(Y/X) = \cN^{\sum_a|\Lambda^a|}(Y/X) := \times_{X }\{\cN(Y^{a}/X)\}_a= \times_{X }\{ \cN^{|\Lambda^a|}(Y^{a}/X)\}_a.  \eea
The objects of this network are fibered products over $X$ of factors $Y^{a}_{I^a}$ for all $a$, where $I^a \subseteq \Lambda^a$.
All the networks $\cN^{i}(Y/X)$ for $0\leq i\leq |\Lambda^a|$ are constructed inductively by the process outlined in the previous definition.
\end{definition}

\begin{proposition} \label{reducible}
Consider a proper local embedding $g: Y\to X$, and let $\{Y^{a}\}_a$ denote the set of irreducible components of $Y$. With the notations from the previous definitions,
\bea \cN^{0}(Y/X) \cong \times_{X }\{\cN^0(Y^{a}/X)\}_a,  \eea
the fiber product over $X$ of all the networks $\cN^0(Y^{a}/X)$.
\end{proposition}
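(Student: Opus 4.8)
The plan is to induct on the number of irreducible components of $Y$, reducing everything to the case of two ``factors'', and then to show that in this case the inductive lifting procedure of Definition~\ref{define all networks} commutes with the fibre product of networks. By construction $\cN(Y/X)=\times_X\{\cN(Y^a/X)\}_a$, so the real content of the proposition is that the passage $\cN\mapsto\cN^0$ distributes over $\times_X$; all the geometric input needed is already packaged in properties (6), (9) and (10) of \propref{lift for morph etale on its image}, in \lemref{gluing in a network}, and in the base-change statement \propref{properties of the canonical network}(b).

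First I would record that the lifting procedure commutes with base change: for any $u\colon X'\to X$ one has $(\cN\times_XX')^0\cong\cN^0\times_XX'$. This is proved by induction on the steps $\cN^i\to\cN^{i-1}$, since $Y^{i-1}_I=F_{S^i_I/Y^i_I}$ and property (6) gives $F_{S^i_I/Y^i_I}\times_XX'\cong F_{S^i_I\times_XX'/Y^i_I\times_XX'}$, while \lemref{gluing in a network}(b) identifies $S^i_I\times_XX'$ with the analogous gluing in $\cN^i\times_XX'$. Combined with \propref{properties of the canonical network}(b), this lets me interpret any ``slice'' of the product network, obtained by fixing the index in all but one factor and base-changing to the corresponding object, as a single-factor canonical network pulled back to that base; hence its lift is the corresponding single-factor configuration network, pulled back.

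Grouping the first $r-1$ components into one factor $Y'$ and the last into $Y^r$, so that $\cN(Y/X)=\cN(Y'/X)\times_X\cN(Y^r/X)$ and, by the inductive hypothesis, $\cN^0(Y'/X)=\times_X\{\cN^0(Y^a/X)\}_{a<r}$, it suffices to treat two factors. Writing an object of the product as $Y_I=Y^1_{I^1}\times_XY^2_{I^2}$ and processing the levels from the top down, I would establish the node-wise isomorphism $Y^0_I\cong Y^{1,0}_{I^1}\times_XY^{2,0}_{I^2}$ by descending induction on $|I^1|+|I^2|$. If $\Sigma^a_{I^a}$ denotes the gluing, inside $\cN^0(Y^a/X)$, of the already lifted strictly-higher nodes, then the inductive hypothesis identifies the gluing defining $Y^0_I=F_{S_I/Y_I}$ as
\[
S_I\cong\bigl(\Sigma^1_{I^1}\times_XY^{2,0}_{I^2}\bigr)\cup_{\Sigma^1_{I^1}\times_X\Sigma^2_{I^2}}\bigl(Y^{1,0}_{I^1}\times_X\Sigma^2_{I^2}\bigr).
\]
Pulling the two single-factor gluings $\Sigma^1_{I^1}$ and $\Sigma^2_{I^2}$ back to the base $Y_I$ and setting $W=\Sigma^1_{I^1}\times_X\Sigma^2_{I^2}$, property (6) computes $F_{W/P}$ and $F_{W/Q}$ for $P=\Sigma^1_{I^1}\times_XY^2_{I^2}$ and $Q=Y^1_{I^1}\times_X\Sigma^2_{I^2}$, and the displayed form of $S_I$ is exactly the ``$F_{W/P}\cup_WF_{W/Q}$'' appearing in property (10). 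Applying (10) over $Y_I$ then yields $Y^{1,0}_{I^1}\times_XY^{2,0}_{I^2}=F_{P/Y_I}\times_{Y_I}F_{Q/Y_I}\cong F_{S_I/F_{W/Y_I}}$. Re-basing this lift along the \'etale map $F_{W/Y_I}\to Y_I$ by means of the iterated-lift identity of property (9) matches it with the network lift $F_{S_I/Y_I}=Y^0_I$, completing the node-wise step. Naturality of all the maps together with the base-change identities of the previous paragraph make these isomorphisms compatible with the network morphisms, giving $\cN^0(Y/X)\cong\cN^0(Y^1/X)\times_X\cN^0(Y^2/X)$.

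The main obstacle is precisely this node-wise identity, and specifically the fact that the canonical lifting organises the nodes by total level $|I^1|+|I^2|$ and therefore mixes the two factors at every step, whereas the target $\times_X\{\cN^0(Y^a/X)\}_a$ is assembled one factor at a time. The reconciliation rests on two delicate points: the precise gluing decomposition of $S_I$ displayed above, which must be tracked through the descending induction using \lemref{gluing in a network}, and the re-basing of the lift from $F_{W/Y_I}$ to $Y_I$, for which property (9) — extended from $F_{Z/Y}$ to the glued object $S_I$ — is exactly the tool that closes the gap between the base produced by property (10) and the base $Y_I$ used in Definition~\ref{define all networks}. Once the node-wise isomorphisms are in hand, the reduction to two factors and the assembly into a morphism of networks are formal.
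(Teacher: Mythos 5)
Your proposal is correct and follows essentially the same route as the paper: induction on the number of components reducing to two factors, descending induction on the total level $|I^1|+|I^2|$, the decomposition of the glued stack $S_I$ as a union of two product pieces glued along their common fibre product, and an application of property (10) of \propref{lift for morph etale on its image} combined with the iterated-lift identities (the paper cites (8) where you cite (9), and uses a Cartesian-diagram argument where you invoke (6), but these are the same tools). No substantive difference in approach.
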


\begin{proof}
The proof relies on induction after the number of irreducible components, as well as decreasing induction after the step $i$ in the construction of the networks $\cN^{i}(Y/X)$, and largely on property (10) in Proposition \ref{lift for morph etale on its image}. The induction after the number of irreducible components reduces to proving the proposition for $Y=Z \bigcup T$. Denote
\bea \cN(Y/X) = \cN^{m+n}(Y/X) := \cN(Z/X)\times_X\cN(T/X)= \cN^{m}(Z/X)\times_X\cN^{n}(T/X),  \eea
and $\cN^i(Z/X)= \{  \phi_J^{ I,i}: Z_J^{i} \hookrightarrow Z_I^{i} \}_{J\supseteq I; I, J\in \cP(\Lambda)}$, with $|\Lambda|=m$, while $\cN^j(Z/X)= \{  \phi_B^{ A,j}: T_B^{j} \hookrightarrow T_A^{j} \}_{B\supseteq A; A, B\in \cP(\Gamma)}$ with $|\Gamma|=n$. Our induction hypothesis will be that for a fixed $k$ integer, $0\leq k\leq m+n$, the objects of the network $\cN^k(Y/X)$ are of the form
\begin{itemize}
\item[(1)] $Z^{|I|}_I\times_XT^{|A|}_A$, if $|I|+|A|\geq k$, and
\item[(2)] $F_{S_{I\cup A}^{k+1}/(Z_I\times_XT_A)}$ otherwise.
\end{itemize}
Here the notations are consistent with Definition \ref{define all networks}, and the index $(k+1)$ refers to the naturally corresponding objects in the $k+1$-th network of $g:Y\to X$. Thus (2) is a direct consequence of the definition of the objects in $\cN^{k}(Y/X)$, together with property (8) in Proposition \ref{lift for morph etale on its image} applied successively to the compositions $S_{I\cup A}^{l+1} \hookrightarrow S_{I\cup A}^{l} \to F_{S_{I\cup A}^{l+1}/Z_I\times_XT_A}$ for $l\geq k+1> |I|+|A|$ .
Condition (1) is  clearly satisfied when $k=m+n$. If satisfied for a fixed $k$, then for any $I\in \cP(\Lambda) $ and  $A\in \cP(\Gamma)$  there is a Cartesian diagram
\bea \diagram  S^{|I|+1}_I\times_XS^{|A|+1}_A \rto \dto & S^{|I|+1}_I\times_X T^{}_A \dto \\
Y^{}_I\times_X S^{|A|+1}_A \rto & Y^{}_I\times_X T^{}_A.  \enddiagram \eea
whenever $|I|+|A|=  k-1$.
Gluing in the network  $\cN^k(Y/X)$ yields
\bea  & S_{I\bigcup A}^k=  (S^{|I|+1}_I\times_X T^{|A|}_A)\bigcup_{S^{|I|+1}_I\times_XS^{|A|+1}_A}(Y^{|I|}_I\times_X S^{|A|+1}_A)= & \\& =( F_{S^{|I|+1}_I\times_XS^{|A|+1}_A/S^{|I|+1}_I\times_X T^{}_A})\bigcup_{S^{|I|+1}_I\times_XS^{|A|+1}_A}(F_{S^{|I|+1}_I\times_XS^{|A|+1}_A/Y^{}_I\times_X S^{|A|+1}_A}),& \eea
(in accord with property (8), Proposition \ref{lift for morph etale on its image}).
Thus by (2) above and properties (10), (8) in Proposition \ref{lift for morph etale on its image}, when $|I|+|A|=  k-1$,
\bea  & (Y_I\times_X T_A)^{k-1}\cong F_{S_{I\bigcup A}^k/Y_I\times_X T_A}\cong & \\& \cong ( F_{S^{|I|+1}_I\times_X T^{}_A/Y^{}_I\times_X T^{}_A}) \times_{Y^{}_I\times_X T^{}_A} (F_{Y^{}_I\times_X S^{|A|+1}_A/Y^{}_I\times_X T^{}_A}) \cong & \\
& \cong   F_{S^{|I|+1}_I/Y^{}_I} \times_X F_{S^{|A|+1}_A/T^{}_A} \cong  Y^{|I|}_I\times_X T^{|A|}_A.&   \eea
(Here $F_{S^{|I|+1}_I/Y^{}_I} \cong  Y^{|I|}_I$  due to property (8) in Proposition \ref{lift for morph etale on its image}, applied successively to the compositions
 $S^{l+1}_I \hookrightarrow S^{l}_I \to Y^{l}_I$ for $l> |I|$.) This ends the proof of the induction step.

\end{proof}

\begin{definition} \label{the functor F_{Y/X}} Let $g: Y \to X$ be a proper local embedding. For each positive integer $k$,  let $Y^k_X$ denote the complement of all the diagonals in the $k$-th fibered product of $Y$ over $X$.
Let $n$ be the largest integer such that $Y^n_X$ is non-empty.
   We define the functor $F_{Y/X}: \Sch_{/X} \to \Sets$ as follows: For any scheme $T$ and any morphism $T \to X$ given by an object $\alpha \in X(T)$, we consider the set of
   all tuples $(( T_i, \beta_i, f_i)_i)_{i\in \{ 1,...,n\} }$, where
 \begin{itemize}
\item[(1)]   $T_i$ are closed subschemes of $T$ such that for $I \subseteq \{ 1,..., n\}$, the intersections $T_I=\bigcap_{i\in I}T_i$ (where by convention $T_{\emptyset}=T$) satisfy
   \bea T_I\times_Xg(Y^k_X) = \bigcup_{J\supseteq I; |J|=k+|I|}T_J,\eea
 \item[(2)]  $\beta_i \in Y(T_i)$ are objects whose pullbacks to any of the subsets $T_I$ are pairwise distinct (non-isomorphic), and
  \item[(3)] $f_i$ is an isomorphism between $g(\beta_i)$ and $\alpha_{|T_i}$.
    \end{itemize}
   \end{definition}

\begin{theorem} \label{main}
Let $g: Y \to X$ be a proper local embedding. The functor $F_{Y/X}$ is a stack. Moreover, there exists a unique morphism $F_{Y/X}\to X$, \'etale and universally closed, with the following properties:
\begin{enumerate}
\item $F_{Y/X} \times_Xg(Y) \cong S_{Y/X},$ where
$S_{Y/X}=S_{\{1,2,...,n\}}$ is the stack constructed by gluing the stacks $\{F_{Y_{ij}/Y_{i}}\}_{i\not=j; i,j\in \{1,...,n\}}$ within the network $\cN^0(Y/X)$. Furthermore, $F_{Y/X} \setminus S_{Y/X} \cong X\setminus Y$, and the \'etale morphism $F_{Y/X}\to X$ is uniquely (up to a unique isomorphism) defined by these properties.
\item  For each object $Y_{I}$ in $\cN(Y/X)$,
\bea Y_{I}\times_XF_{Y/X} \cong \bigsqcup_{|I_0|=|I|}F_{Y_{I_1}/Y_{I_0}},\eea
where $I_1\supset I_0$ is a fixed choice such that $|I_1|=|I_0|+1$.
\item If $g:Y\to X$ is a closed embedding, then $F_{Y/X}\cong X$.
\item If $g:Y\to X$ is \'etale and proper, and $X$ is connected then $F_{Y/X}\cong Y$.
\item For any morphism of stacks $u: X' \to X$ and $Y':= Y\times_{X}X'$,
there exists a morphism $F_u:F_{Y'/X'} \to F_{Y/X}$ making the squares in the following diagram Cartesian:
\bea     \diagram    Y' \dto \rto & {F_{Y'/X'}}  \rto \dto^{F_u} & {X'} \dto^{u}  \\
Y \rto & {F_{Y/X}} \rto & {X}. \enddiagram  \eea
\item If $h: Z \to Y$ is proper and \'etale on its image, and $g:Y\to X$ is a closed embedding, then $F_{Z/Y}\cong Y\times_XF_{Z/X}$. In particular, there exists a natural \'etale morphism $g_*:F_{Z/Y} \to F_{Z/X}$.
\item If $g: Y\bigcup T \to X$ is a local embedding, then
\bea F_{Y/X}\times_XF_{T/X}  \cong F_{Y\bigcup T/X}.\eea

\end{enumerate}

\end{theorem}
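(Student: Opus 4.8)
The plan is to prove Theorem \ref{main} by establishing the functorial/stack statement and each of the listed properties (1)--(7), using the groundwork already laid: the detailed properties of $F_{Y/X}$ for morphisms \'etale on their image (Proposition \ref{lift for morph etale on its image}), the splitting Lemma \ref{split into etale and generically deg. 1}, the gluing Lemma \ref{gluing in a network}, and crucially Proposition \ref{reducible} which reduces the reducible case to a fiber product of networks. First I would show that the functor $F_{Y/X}$ of Definition \ref{the functor F_{Y/X}} is represented by the stack $\cN^0(Y/X)$ assembled via gluing: the data $(T_i, \beta_i, f_i)$ in the functor exactly encode, for each point, a choice of which stratum of $g$ the point sits in together with a compatible lift to $Y$, which is precisely the data parametrized by the configuration stack $\cN^0(Y/X)$ constructed in Definition \ref{define all networks}. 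The closed subschemes $T_i$ with their intersection conditions in (1) match the incidence relations of the network, the objects $\beta_i\in Y(T_i)$ distinct on overlaps match property (2) of Definition \ref{the functor F_{Y/X}} and the distinctness built into $Y^2_X$, and the isomorphisms $f_i$ provide the descent data over $X$. This identification simultaneously yields that $F_{Y/X}$ is a stack and that the morphism $F_{Y/X}\to X$ (the target map of the network) is \'etale and universally closed, the latter by applying property (3) of Proposition \ref{lift for morph etale on its image} to each $F_{S_I^i/Y_I^i}$ in the inductive construction.

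Next I would verify property (1), which is the heart of the theorem. The claim $F_{Y/X}\times_Xg(Y)\cong S_{Y/X}$ should follow by tracing the inductive construction of $\cN^0(Y/X)$: the top-level gluing $S_{\{1,\ldots,n\}}$ of the stacks $F_{Y_{ij}/Y_i}$ within the network is precisely the preimage of $g(Y)$ under the target map, as seen from property (8) of Proposition \ref{lift for morph etale on its image} (which identifies $F_{Z/F_{Y/X}}$-type preimages) applied at each inductive step. The complement statement $F_{Y/X}\setminus S_{Y/X}\cong X\setminus Y$ follows from (0)ii) of that same proposition applied throughout the induction, since away from $g(Y)$ every lift is trivial and the network collapses to $X$. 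Uniqueness up to unique isomorphism is then property (2) of Proposition \ref{lift for morph etale on its image} promoted through the gluing, using the universal property in Lemma \ref{gluing in a network}a). Property (2) of the theorem, giving the base change $Y_I\times_XF_{Y/X}\cong\bigsqcup F_{Y_{I_1}/Y_{I_0}}$, I would obtain from property (8) of Proposition \ref{lift for morph etale on its image} together with Lemma \ref{subnetworks}, which identifies the canonical subnetworks $\cN(Y_L/Y_K)$ and makes the disjoint-union decomposition bookkeeping transparent.

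Properties (3)--(6) are then largely formal: (3) and (4) follow from the corresponding items (4) and (5) in Proposition \ref{lift for morph etale on its image} since in those degenerate cases the network has a single nontrivial node; (5) is base change, which I would deduce from property (6) of Proposition \ref{lift for morph etale on its image} combined with part b) of Proposition \ref{properties of the canonical network} (the network commutes with base change) and part b) of Lemma \ref{gluing in a network} (gluing commutes with base change), so that the entire functorial presentation pulls back cleanly; (6) is exactly property (7) of Proposition \ref{lift for morph etale on its image} extended to the reducible setting via Proposition \ref{reducible}. Finally, property (7), the multiplicativity $F_{Y/X}\times_XF_{T/X}\cong F_{Y\bigcup T/X}$, is an immediate consequence of Proposition \ref{reducible}, which states $\cN^0(Y\bigcup T/X)\cong \cN^0(Y/X)\times_X\cN^0(T/X)$: applying the target-map identification $F_{-/X}=Y^0_{\emptyset}$ of the $\cN^0$ network to both sides gives the result, since the target of a fiber product of networks is the fiber product of targets.

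The main obstacle I expect is the first step: rigorously proving that the abstractly-defined functor $F_{Y/X}$ of Definition \ref{the functor F_{Y/X}} coincides with the inductively-glued stack $\cN^0(Y/X)$ and hence is representable. The functor is specified by stratification data $(T_i,\beta_i,f_i)$ with intersection constraints, whereas $\cN^0(Y/X)$ is built by a multi-step sequence of \'etale lifts and gluings; matching these requires carefully showing that a $T$-point of the glued stack produces exactly one such tuple and conversely, which amounts to verifying that the gluing universal property of Lemma \ref{gluing in a network}a) reproduces the incidence condition (1) of Definition \ref{the functor F_{Y/X}}, and that the distinctness condition (2) corresponds to living in the complement-of-diagonals $Y^k_X$. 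Everything downstream of this identification reduces to the already-established properties, so the bulk of the genuine work, and the place where the argument is most delicate, is establishing this representability and the resulting \'etale universally-closed structure of $F_{Y/X}\to X$.
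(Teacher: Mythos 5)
Your proposal follows essentially the same route as the paper: identify the functor $F_{Y/X}$ of Definition \ref{the functor F_{Y/X}} with the target of the configuration network $\cN^0(Y/X)$ (the paper does this via a decreasing induction on $|I|$ proving $F_{Y_J/Y_I}=Y_I^{|I|}$, with base case the \'etale-on-image situation of Proposition \ref{lift for morph etale on its image}), and then deduce properties (1)--(7) from Proposition \ref{lift for morph etale on its image}, Lemma \ref{gluing in a network}, Proposition \ref{properties of the canonical network}, and Proposition \ref{reducible}, exactly as you indicate. You correctly single out the representability/identification step as the substantive part of the argument; the paper's induction over the subnetworks $\cN^0(Y_J/Y_I)$ of Lemma \ref{subnetworks} is the precise implementation of the matching you describe.
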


\begin{proof}

With the notations from Definitions \ref{the canonical network}  and \ref{define all networks}, we will show that $F_{Y/X}= X^0$, the target of the network $\cN^0(Y/X)$. More generally, if $I, J\subset \{ 1,...,n\}$ are such that $J\supset I$ and $|J|=|I|+1$, then we will show by decreasing induction on $I$ that
$F_{Y_{J}/Y_{I}}= Y_{I}^{|I|}$, the target of the network  $\cN^0(Y_{J}/Y_{I})$. We recall that $\cN^0(Y_{J}/Y_{I})$ is a subnetwork of $\cN^0(Y/X)$, due to Lemma \ref{subnetworks} and Definition \ref{define all networks}.

As a first step, we notice that when $|I|=n-1$, the functor $F_{Y_{J}/Y_{I}}$, coincides with the one in Proposition \ref{lift for morph etale on its image}, and it is thus a stack satisfying all the required properties.
 Consider now a general $I$ and assume that   $F_{Y_{K}/Y_{J}}= Y_{J}^{|J|}$ for all $K\supset J\supset I$ with $|K|=|J|+1=|I|+2$. Consider now a scheme $T_I$ with a morphism $T_I\to Y_{I}$ and a set of data  $( T_{I \bigcup\{ i \}}, \beta_{I \bigcup\{ i \}}, f_{I \bigcup\{ i \}})_{i\not\in I}$ like in Definition \ref{the functor F_{Y/X}}. Thus
 \bean \label{fiber prod1} T_I \times_{Y_I} \phi_J^I(Y_J) \cong \bigcup_{i\not\in I}T_{I \bigcup\{ i \}}. \eean  Moreover, for each $J$ as above, the set of data consisting in
   \bea  \beta_{J} \mbox{ together with the tuple} ( T_{J \bigcup\{ i \}}, \phi_{J \bigcup\{ i \}}^{I \bigcup\{ i \} *}\beta_{I \bigcup\{ i \}}, \phi_{J \bigcup\{ i \}}^{I \bigcup\{ i \} *}f_{I \bigcup\{ i \}})_{i\not\in J}\eea determine an element in  $F_{Y_{K}/Y_{J}}(T)$ and thus by the induction hypothesis, a morphism $T_J \to Y_{J}^{|J|}$, making the following diagrams commutative:
 \bea   \diagram T_{J \bigcup\{ i \}} \dto \rto &  Y_{J \bigcup\{ i \}}^{|J|} \dto \\
 T_J \rto  & Y_{J}^{|J|}, \enddiagram \eea
 for all $i\not\in J$.
 Composition with the closed embeddings $ Y_{J}^{|J|} \hookrightarrow S_{I}^{|J|}$, for  $S_I^{|J|}$ like in Definition \ref{define all networks},
 yields maps $T_J \to S_{I}^{|J|}$ for all $J$ as above, which glue to $\bigcup_{i\not\in I}T_{I \bigcup\{ i \}}  \to S_{I}^{|J|}$. This, together with equation (\ref{fiber prod1}) and Proposition \ref{lift for morph etale on its image}, insure the existence of a natural morphism $T_I \to Y_{I}^{|I|}$ compatible with the data $( T_{I \bigcup\{ i \}}, \beta_{I \bigcup\{ i \}}, f_{I \bigcup\{ i \}})_{i\not\in I}$. This ends the induction step.

 From here, properties (1) and (2), and (6) follow from the construction of the network $\cN(Y/X)$, together with Proposition \ref{lift for morph etale on its image}. Properties (2) and (3) are direct consequences of (1). Property (4) also follows the construction of the network $\cN(Y/X)$, together with properties listed in Proposition \ref{properties of the canonical network} b), Lemma \ref{gluing in a network}, b) and Proposition \ref{properties of split}, part (1). Property (7) is a consequence of Proposition \ref{reducible}.

\end{proof}

\begin{example}
 If $g: Y \to X$ is proper and \'etale on its image, then $F_{Y/X}$ coincides with the stack defined in Proposition \ref{lift for morph etale on its image}.

 \end{example}

\begin{example} For a separated Deligne-Mumford stack $X$, the diagonal morphism $ \Delta: X \to X\times X$ is a
finite local embedding. Then $X \times_{X\times X} X$ is the inertia
stack $I^1(X)$ of $X$, representing objects of $X$ with their
isomorphisms. Similarly, the higher inertia stack $I^n(X)$ is
defined as the  $n$-th order product of $X$ over $X\times X$.  With
notations from \ref{the canonical network}, the objects of the
canonical stack of $ \Delta: X \to X\times X$ are the components
$I^n_0(X)$ of the inertia stacks obtained after removing all the
previous components which are images of $I^k(X)$ for $k<n$, as well
as $X$ itself, through diagonal morphisms.

\end{example}



\providecommand{\bysame}{\leavevmode\hbox to3em{\hrulefill}\thinspace}

\end{document}